\documentclass[final,3p,times,12pt]{elsarticle}

\usepackage{amsmath,amssymb,mathtools,amsthm}  
\usepackage{graphicx}                           
\usepackage{caption,subcaption}                
\usepackage{enumitem}                           
\usepackage{xcolor}                             
\usepackage{mathrsfs}                           

\definecolor{brown}{rgb}{0.6,0.3,0}   
\definecolor{blue}{rgb}{0,0,1}

\newtheorem{lemma}{Lemma}[section]
\newtheorem{theorem}{Theorem}[section]
\newtheorem{proposition}{Proposition}[section]
\newtheorem{corollary}{Corollary}[section]
\newtheorem{remark}{Remark}[section]

\numberwithin{equation}{section}  

\newcommand{\beq}{\begin{equation}}
	\newcommand{\eeq}{\end{equation}}

\def\a{\alpha}

\usepackage[numbers]{natbib}
\usepackage{array}
\usepackage{booktabs}

\journal{ }
\begin{document}
	
\begin{frontmatter}
\title{Separating zeros of polynomials using an added interlacing point}

\author[label1]{Kerstin Jordaan}\ead{jordakh@unisa.ac.za}
\author[label2]{Vikash Kumar}\ead{kumarv@unisa.ac.za}
\cortext[cor1]{Corresponding author}

\address[label1]{College of Economic and Management Sciences, University of South Africa, Pretoria, 0002, South Africa}
\address[label2]{Department of Decision Sciences, University of South Africa, Pretoria 0002, South Africa}

\begin{abstract}
 Following a systematic analysis of existing results, we investigate when complete interlacing between the zeros of distinct polynomial sequences, $\{\mathcal{P}_n\}$ and $\{\mathcal{G}_n\}$ can be achieved by using a naturally arising extra point. Specifically, we analyse several general mixed recurrence relations that ensure the $n+1$ zeros of the polynomial $(x-E)\mathcal{P}_n(x)$ interlace with the $k$ zeros of $\mathcal{G}_k$, where $k=n$ or $n+1$. In addition, we show that imposing specific conditions on the extra point $E$ yields full interlacing between the zeros of $\mathcal{P}_n$ and $\mathcal{G}_k$ for a suitable choice of $n$. The approach provides a consolidated framework broadly applicable to both orthogonal and non-orthogonal polynomials and we illustrate this with new interlacing results for zeros of Krawtchouk, Meixner, and Narayana polynomials. We also illustrate that this general approach can be used to recover and refine existing results regarding the complete interlacing of zeros for classical Jacobi and Laguerre polynomials.
\end{abstract}

\begin{keyword}
Polynomials  \sep Zeros \sep Interlacing \sep Meixner polynomials \sep Krawtchouk polynomials \sep Jacobi polynomials \sep Laguerre polynomials  \sep Narayana polynomials
\MSC[2020] 33C45 \sep 42C05
\end{keyword}

\end{frontmatter}

\section{Introduction }
Let $p_n$ and $q_m$ be polynomials of degree $n$ and $m$, respectively, having real and distinct zeros. Let the zeros of $p_n$ be denoted by $\{x_{k,n}\}_{k=1}^n$ and the zeros of $q_m$ by $\{y_{k,m}\}_{k=1}^m$, in ascending order. We say that the zeros of $p_n$ alternate the zeros of $q_n$, denoted $p_n \prec q_n$ \cite{Liu-Wang-Unified}, if
\begin{align}
	x_{1,n} < y_{1,n} < x_{2,n} <y_{2,n}< \cdots < y_{n-1,n} < x_{n,n} < y_{n,n}.
\end{align}
We say that the zeros of $p_n$ interlace the zeros of $q_{n-1}$, denoted $p_n \prec q_{n-1}$ \cite{Liu-Wang-Unified}, if
\begin{align}
	x_{1,n} < y_{1,n-1} < x_{2,n} < y_{2,n-1} < \cdots < y_{n-1,n-1} < x_{n,n}.
\end{align}

Interlacing of zeros is a fundamental property in the theory of polynomials \cite{Fisk-2008notes, Obrechkoff-2003book,Rahman-Schmeisser-Book} and has useful applications. For example, Martínez-Finkelshtein et. al. \cite{Andrei-Rafael-Perales-Finitefree-convolution-IMRN-2024} show that the preservation of interlacing by finite free convolutions is an essential tool for establishing the real-rootedness of specific hypergeometric families and utilise this property to prove that root separation, or mesh, remains non-decreasing under additive finite free convolution.  Over the years, significant progress has been made in the study of interlacing properties, particularly for zeros of orthogonal polynomials see, for example, \cite{Brezinski-2004, Dimitrov-First-Paper-Interlacing-Notation, Dimitrov-Ismail-Rafaeli-JAT-2013,Driver-Jordaan-NumerMath-2007, Driver-Jordaan-JAT-2012,  Driver-Jordaan-Mbuyi-NumericalAlg-2008, Joulak-ANM-2005} and applications. Interlacing properties of two different polynomials of degree $n$ and $n-1$ are used by Lubinsky \cite{Lubinsky-Interlacing-and-op-ProcAMS-2016} to derive general quadrature identities and also obtain the new orthonormal polynomials sequence with respect to the specific orthogonality measure. In \cite{Levin-Lubinsky-AsymptoticOP-and-Interlacing-JAT-2022}, interlacing properties are used to study the relationship between the pointwise asymptotics of orthogonal polynomials and the spacing of zeros of orthogonal polynomials of consecutive degrees. The fundamental property that zeros of consecutive terms in a sequence of orthogonal polynomials are interlaced is an important reason for the prominence of orthogonal polynomials in these investigations, but the interlacing property is not restricted only to orthogonal polynomials. One of the main tools for establishing interlacing of zeros is the use of mixed recurrence relations and this does not require orthogonality of any of the polynomials in the relation, only interlaced zeros of two of the polynomials in the relation.

Let $\mathcal{P}_i$, $\mathcal{G}_j$, and $\mathcal{Q}_k$ denote polynomials of degrees $i$, $j$, and $k$, respectively, whose zeros are  real and  lie in an interval $(a,b)$. Assume that $\mathcal{G}_j \prec \mathcal{Q}_k$ whenever $k=j-1$ or $\mathcal{Q}_k \prec \mathcal{G}_j$ whenever $k=j+1$. These polynomials satisfy a general mixed recurrence relation
\begin{align}\label{MoreGeneralMixedTTRR}
	A(x)\mathcal{P}_i(x)=B(x)\mathcal{G}_j(x)+H(x)\mathcal{Q}_k(x),
\end{align}
where $A(x)$, $B(x)$, and $H(x)$ are continuous functions on $(a,b)$. In Table~\ref{Summary-Mixed-recurrence}, we summarize known results concerning the interlacing of zeros of two different sequences of polynomials obtained via mixed recurrence relations of type \eqref{MoreGeneralMixedTTRR} under suitable conditions.   Mixed recurrence relation of type \eqref{MoreGeneralMixedTTRR} are also used to obtain inner bounds for the extreme zeros of polynomials; see for instance \cite{Jooste-Jordaan-Extremezero-MP-JAT-2025} and references therein. 

As is evident from Table \ref{Summary-Mixed-recurrence}, the case where the coefficient $H(x)$ is linear has not been analysed in detail for different values of $i$, $j$ and $k$. The general result in \cite{Jooste-Jordaan-Numeralg-2025} for $i=n$, $j=n$, $k=n+1$ and $H(x)=x-E$, was applied to obtain new interlacing results for zeros of Meixner-Pollaczek, Pseudo-Jacobi polynomials and Continuous Hahn polynomials. Some results along these lines, proved specifically for Laguerre, Jacobi and Gegenbauer polynomials but involving the case where $H(x)=-(x-E)$, were explored by Arves\'u et. al. in \cite{Arvesu-Driver-Littlejohn-ITSF-2021} for Laguerre polynomials when $i=n$, $j=n+1$ and $k=n+1$, and in \cite{Arvesu-Driver-Littlejohn-RamanujanJ-2023}, for Jacobi and Gegenbauer polynomials when $i=n$, $j=n$ and $k=n+1$.

\begin{table}[ht]
	\centering
	\setlength{\tabcolsep}{20pt}  
	\begin{tabular}{@{} >{\raggedright\arraybackslash}p{8cm} l @{}}
		\toprule
		\textbf{Conditions} & \textbf{Interlacing} \\
		\midrule
		$i=n-l, j=n, k=n+1$	 & $\mathcal{Q}_{n+1}\prec B_l\mathcal{P}_{n-l}$ \cite[Theorem 2.1]{Driver-Jordaan-JAT-2012}\\	
		$A\neq0 ,B:=B_l(x)${, deg$(B_l)=l$, $l=1,2,\dots,n-1$}\\
		\addlinespace[8pt]
		$i=n, j=n, k=n+1$	 & $\mathcal{Q}_{n+1}\prec\mathcal{P}_n$ \cite[Lemma 1.1]{Jordaan-Tookos-ANM-2009}\\ $A=1, B\neq0,H\neq0$&$\mathcal{P}_n\prec\mathcal{G}_n$ or $\mathcal{G}_n\prec\mathcal{P}_n$ \\ 
		
		\addlinespace[10pt]
		$i=n+1, j=n, k=n+1$	 & $\mathcal{P}_{n+1}\prec\mathcal{G}_n$\cite[Lemma 1.1]{Jordaan-Tookos-ANM-2009}\\ $A=1, B\neq0,H\neq0$&$\mathcal{P}_{n+1}\prec\mathcal{Q}_{n+1}$ or $\mathcal{Q}_{n+1}\prec\mathcal{P}_{n+1}$ \\

		\addlinespace[12pt]
		$i=n, j=n, k=n-1$& $\mathcal{P}_{n}\prec\mathcal{G}_n$ \cite[Lemma 1]{Tcheutia-Jooste-Koepf-ANM-2018}\\ $A=1, H\neq0$ \\ 
		
		\addlinespace[10pt]
		$i=n, j=n, k=n+1$	& $(x-E)\mathcal{P}_{n}\prec\mathcal{G}_n$ \cite[Theorem 2.1]{Jooste-Jordaan-Numeralg-2025}\\ $A(x)>0, B(E)\neq0, H(x)=x-E,~ E\in \mathbb{R}$ \\ 
		
		\bottomrule
	\end{tabular}
	\caption{\label{Summary-Mixed-recurrence}Summary of results on interlacing of zeros for polynomials satisfying mixed recurrence equation $A(x)\mathcal{P}_i(x)=B(x)\mathcal{G}_j(x)+H(x)\mathcal{Q}_k(x)$ with $\mathcal{Q}_{n+1}\prec\mathcal{G}_n$ or $\mathcal{G}_n\prec\mathcal{Q}_{n-1}$}
\end{table}

In this paper we address general cases where $H(x)$ has a single sign change in \S \ref{main results}, providing proof for these results in \S \ref{Proofs}.  We use Theorem \ref{MainTheorem1} to establish new interlacing results for zeros of Krawtchouk polynomials in \S \ref{Examples Krawtchouk} and Meixner polynomials in \S \ref{Examples Meixner}. We apply Theorem \ref{MainTheorem2*} to a class of non-orthogonal polynomials, known as Narayana polynomials in \S \ref{Examples Naryana}. In \S \ref{Examples Jacobi}, we prove a stronger result than \cite[Theorem 3]{Arvesu-Driver-Littlejohn-RamanujanJ-2023} for Jacobi polynomials using Theorem \ref{MainTheorem2*}, also recovering \cite[Theorem 1]{Arvesu-Driver-Littlejohn-RamanujanJ-2023} using Theorem \ref{MainTheorem1}. Finally, in \S \ref{Examples Laguerre}, we use Theorem \ref{MainTheorem1} to recover \cite[Theorem 2.1]{Arvesu-Driver-Littlejohn-ITSF-2021} for Laguerre polynomials.

\section{Interlacing of zeros}\label{main results}

\begin{theorem}\label{MainTheorem1}
	Let $\mathcal{G}_{n+1}$ and $\mathcal{Q}_{n+1}$ denote monic polynomials of degree $n+1$ with all real zeros on an (infinite or finite) interval $(a,b)$. Denote the zeros of $\mathcal{G}_{n+1}$ by $\{x_{k,n+1}\}_{k=1}^{n+1}$ and suppose that $\mathcal{G}_{n+1}\prec \mathcal{Q}_{n+1}$ or $\mathcal{Q}_{n+1} \prec \mathcal{G}_{n+1}$. Let $\mathcal{P}_{n}$ be any monic polynomial with $n$ real zeros, which is written in terms of $\mathcal{G}_{n+1}$ and $\mathcal{Q}_{n+1}$, for each $n\in\mathbb{N}$, as follows 
	
	\begin{align}\label{GeneralMixedTTRR}
		A(x)\mathcal{P}_{n}(x)=B(x)\mathcal{G}_{n+1}(x)+(x-E)\mathcal{Q}_{n+1}(x),
	\end{align}
	where $E\in \mathbb{R}$ and $A(x)>0$ on the interval $(a,b)$. Assume also that $B(E)\neq0$ and that the polynomials $\mathcal{G}_{n+1}$ and $\mathcal{P}_{n}$ have no common zeros.
	\begin{enumerate}
		\item If $\mathcal{Q}_{n+1} \prec \mathcal{G}_{n+1}$, then $E<x_{n+1,n+1}$  and $(x-E)\mathcal{P}_n(x) \prec \mathcal{G}_{n+1}(x)$. Moreover, $\mathcal{G}_{n+1}\prec\mathcal{P}_n$  if and only if $E<x_{1,n+1}$.
		\item If $\mathcal{G}_{n+1}\prec \mathcal{Q}_{n+1}$, then  $E>x_{1,n+1}$ and $\mathcal{G}_{n+1}(x)\prec (x-E)\mathcal{P}_{n}(x)$. Moreover, $\mathcal{G}_{n+1}\prec\mathcal{P}_n$ if and only if $E>x_{n+1,n+1}$. 
	\end{enumerate}
	
\end{theorem}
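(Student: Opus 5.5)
Evaluating the relation \eqref{GeneralMixedTTRR} at the zeros $x_{k}:=x_{k,n+1}$ of $\mathcal{G}_{n+1}$ and counting sign changes gives the whole argument. At these points
\begin{align*}
A(x_k)\mathcal{P}_n(x_k)=(x_k-E)\mathcal{Q}_{n+1}(x_k),\qquad k=1,\dots,n+1,
\end{align*}
with $A(x_k)>0$. Since $\mathcal{G}_{n+1}$ and $\mathcal{P}_n$ have no common zeros, $\mathcal{P}_n(x_k)\neq 0$, so $x_k\neq E$ and $\mathcal{Q}_{n+1}(x_k)\neq 0$. Hence the sign of $(x_k-E)\mathcal{P}_n(x_k)$ equals the sign of $(x_k-E)^2\mathcal{Q}_{n+1}(x_k)$, i.e.\ the sign of $\mathcal{Q}_{n+1}(x_k)$. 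The interlacing hypothesis pins this down. Both polynomials are monic of degree $n+1$ with simple interlacing zeros, so exactly one zero of $\mathcal{Q}_{n+1}$ lies between consecutive $x_k$. In case 1 ($\mathcal{Q}_{n+1}\prec\mathcal{G}_{n+1}$) all zeros of $\mathcal{Q}_{n+1}$ up to its $k$-th lie below $x_k$, so $\operatorname{sign}\mathcal{Q}_{n+1}(x_k)=(-1)^{n+1-k}$. In case 2 the count is shifted by one, giving $(-1)^{n+2-k}$.

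Set $R(x)=(x-E)\mathcal{P}_n(x)$, a monic polynomial of degree $n+1$. In case 1, $\operatorname{sign}R(x_k)=(-1)^{n+1-k}$ for all $k$, so $R$ alternates in sign on $x_1<\dots<x_{n+1}$. By the intermediate value theorem, $R$ has a zero in each of the $n$ intervals $(x_k,x_{k+1})$. The last value, $R(x_{n+1})$, is positive, matching the sign of $R$ at $+\infty$, so no extra zero is forced to the right. At the left, $\operatorname{sign}R(x_1)=(-1)^n$ while $\operatorname{sign}R(-\infty)=(-1)^{n+1}$, which forces one more zero in $(-\infty,x_1)$. That accounts for all $n+1$ zeros of $R$, so they are real, simple, and satisfy $R\prec\mathcal{G}_{n+1}$. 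Since $E$ is one of these zeros, $E<x_{n+1}$. Case 2 is symmetric. Now $R(x_1)$ has sign $(-1)^{n+1}$, agreeing with $-\infty$, while $R(x_{n+1})<0$ forces a zero in $(x_{n+1},\infty)$. This gives $\mathcal{G}_{n+1}\prec R$ and $E>x_1$.

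For the ``moreover'' parts, I read $\mathcal{G}_{n+1}\prec\mathcal{P}_n$ in the sense $p_n\prec q_{n-1}$: one zero of $\mathcal{P}_n$ in each gap $(x_k,x_{k+1})$. In case 1, the zeros of $R$ are $z_1<x_1<z_2<\dots<z_{n+1}<x_{n+1}$, and $E$ is one of the $z_j$. Removing $E$ leaves the zeros of $\mathcal{P}_n$, one per gap, exactly when $E=z_1$, i.e.\ when $E<x_1$. Conversely, if $E<x_1$, then $E$ can only be $z_1$. If instead $E=z_j$ with $j\ge2$, the gap $(x_{j-1},x_j)$ contains no zero of $\mathcal{P}_n$ and interlacing fails. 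Case 2 is the mirror image, with the condition $E>x_{n+1}$. I must also handle the possibility that $E$ coincides with a zero of $\mathcal{P}_n$, making $R$ have a double zero. This is excluded because we showed $R$ has $n+1$ distinct zeros.

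The main point requiring care is the sign bookkeeping for $\mathcal{Q}_{n+1}(x_k)$ together with the exclusion of $x_k=E$. The latter is where the hypothesis that $\mathcal{G}_{n+1}$ and $\mathcal{P}_n$ have no common zeros is used. I would expect $B(E)\ne0$ to play the same role: evaluating \eqref{GeneralMixedTTRR} at $E$ gives $A(E)\mathcal{P}_n(E)=B(E)\mathcal{G}_{n+1}(E)$, which shows $\mathcal{P}_n(E)$ and $\mathcal{G}_{n+1}(E)$ vanish together or not at all. It therefore gives an alternative route to ruling out coincidences at $E$.
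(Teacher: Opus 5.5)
Your proof is correct, and it is organized differently from the paper's.

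\textbf{The paper's route.} The paper works with $\mathcal{P}_n$ itself. The sign of the product $\mathcal{P}_n(x_{k,n+1})\mathcal{P}_n(x_{k+1,n+1})$ depends on whether $E\in(x_{k,n+1},x_{k+1,n+1})$, so the paper gets a sign change only in the gaps not containing $E$. It must therefore split into cases: $E<x_{1,n+1}$, $E$ inside some gap, or $E>x_{n+1,n+1}$. In each case it uses the sign of $\mathcal{P}_n(x_{n+1,n+1})$ and a parity count on $(x_{n+1,n+1},\infty)$. This rules out the forbidden position of $E$ by contradiction and places the leftover zero in the admissible positions.

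\textbf{Your route.} You multiply by $x_k-E$, so the relation yields a factor $(x_k-E)^2$. The sign of $R(x_k)=(x_k-E)\mathcal{P}_n(x_k)$ then equals that of $\mathcal{Q}_{n+1}(x_k)$, independent of where $E$ is. A single intermediate-value count for the monic degree-$(n+1)$ polynomial $R$ gives the full interlacing of $R$ with $\mathcal{G}_{n+1}$. The bounds $E<x_{n+1,n+1}$ (resp.\ $E>x_{1,n+1}$) and both ``moreover'' criteria are then read off by asking which zero of $R$ equals $E$.

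\textbf{What each buys.} Your argument is shorter and case-free. It does not need the hypothesis that $\mathcal{P}_n$ is real-rooted, and it gives simplicity of all zeros of $(x-E)\mathcal{P}_n$, including $\mathcal{P}_n(E)\neq 0$, for free. It also makes transparent why the analogous Theorem~\ref{MainTheorem2} is weaker: there the signs of $R$ at the two ends force no extra zero, leaving two zeros unlocated. The paper's case analysis, in exchange, spells out the explicit configuration of the zeros of $\mathcal{P}_n$ for each location of $E$. It also runs parallel to its proofs of Theorems~\ref{MainTheorem2*} and \ref{MainTheorem2}.

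\textbf{On your aside about $B(E)\neq0$.} Your closing remark is unnecessary, and the paper's proof never uses that hypothesis either. As stated, it would also require $A(E)\neq 0$, which is only guaranteed when $E\in(a,b)$.
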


\begin{theorem}\label{MainTheorem2*}
	Suppose the zeros of the monic polynomials $\mathcal{G}_{n}$, $\mathcal{P}_n$, and $\mathcal{Q}_{n-1}$, with degree $n,n$ and $n-1$ respectively, are all real on an (infinite or finite) interval $(a,b)$. Denote the zeros of $\mathcal{G}_{n}$ by $\{x_{k,n}\}_{k=1}^{n}$ in ascending order. Assume that $\mathcal{G}_n\prec \mathcal {Q}_{n-1}$
	and that there exist a real constant $E$ and polynomials $A(x)$, $B(x)$ with $A(x)>0$ on $(a,b)$ such that
	\begin{align}\label{GeneralMixedTTRR2*}
		A(x)\mathcal{P}_{n}(x) = B(x)\mathcal{G}_{n}(x) - (x-E)\mathcal{Q}_{n-1}(x).
	\end{align}
	Assume also that $B(E) \neq 0$ and that $\mathcal{G}_{n}$ and $\mathcal{P}_{n}$ have no common zeros.	Then, $(x-E)\mathcal{P}_n(x) \prec \mathcal{G}_n(x)$. In particular, $\mathcal{P}_n\prec \mathcal{G}_n$, whenever $E > x_{n,n}$ while $\mathcal{G}_n \prec \mathcal{P}_n$, whenever $E < x_{1,n}$.
\end{theorem}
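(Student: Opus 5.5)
Evaluating \eqref{GeneralMixedTTRR2*} at the zeros of $\mathcal{G}_n$ gives the sign of $\mathcal{P}_n$ there, and counting sign changes then places the zeros of $\mathcal{P}_n$ and the point $E$; this is the plan. At each zero $x_{k,n}$ of $\mathcal{G}_n$, the relation reduces to
\begin{align*}
A(x_{k,n})\mathcal{P}_n(x_{k,n}) = -(x_{k,n}-E)\mathcal{Q}_{n-1}(x_{k,n}).
\end{align*}
Since $A>0$ on $(a,b)$, the sign of $\mathcal{P}_n(x_{k,n})$ equals the sign of $-(x_{k,n}-E)\mathcal{Q}_{n-1}(x_{k,n})$. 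Note that $E\neq x_{k,n}$ for every $k$: if $E=x_{k,n}$, then $\mathcal{P}_n(x_{k,n})=0$, so $\mathcal{G}_n$ and $\mathcal{P}_n$ would share a zero, which is excluded. Also $\mathcal{Q}_{n-1}(x_{k,n})\neq 0$ by the strict interlacing $\mathcal{G}_n\prec\mathcal{Q}_{n-1}$. Moreover, since $\mathcal{Q}_{n-1}$ is monic with exactly one zero in each gap $(x_{k,n},x_{k+1,n})$, we have $\operatorname{sign}\mathcal{Q}_{n-1}(x_{k,n})=(-1)^{n-k}$.

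Next define $F(x)=(x-E)\mathcal{P}_n(x)$, a monic polynomial of degree $n+1$. At the zeros of $\mathcal{G}_n$,
\begin{align*}
A(x_{k,n})F(x_{k,n}) = -(x_{k,n}-E)^2\mathcal{Q}_{n-1}(x_{k,n}),
\end{align*}
so $\operatorname{sign}F(x_{k,n})=-(-1)^{n-k}=(-1)^{n-k+1}$. This sign does not depend on where $E$ lies. Consequently $F$ changes sign on each of the $n-1$ intervals $(x_{k,n},x_{k+1,n})$. Furthermore, $F(x_{n,n})<0$ while $F(x)\to+\infty$ as $x\to\infty$, which gives a zero in $(x_{n,n},\infty)$. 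Similarly, $\operatorname{sign}F(x_{1,n})=(-1)^n$ while $\operatorname{sign}F(-\infty)=(-1)^{n+1}$, which gives a zero in $(-\infty,x_{1,n})$. That accounts for $n+1$ sign-change intervals for a polynomial of degree $n+1$. Hence each interval contains exactly one zero, all zeros of $F$ are real and simple, and none coincides with any $x_{k,n}$. This is precisely $(x-E)\mathcal{P}_n\prec\mathcal{G}_n$.

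One point needs care: the outer zeros might fall outside $(a,b)$. The interlacing definition does not require them to lie in $(a,b)$, however, so the counting argument over $\mathbb{R}$ suffices. The hypothesis $A>0$ is used only at the points $x_{k,n}$, which do lie in $(a,b)$. I expect the main obstacle to be exactly this bookkeeping of signs and endpoints, including making sure $E$ may be a zero of $F$ in any interval. The hypothesis $B(E)\neq0$ also enters here. Evaluating the relation at $x=E$ gives $A(E)\mathcal{P}_n(E)=B(E)\mathcal{G}_n(E)\neq0$ whenever $E\in(a,b)$, so $E$ is not a zero of $\mathcal{P}_n$ and $F$ has simple zeros. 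Simplicity is already forced by the counting argument, but this check confirms it.

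The particular cases then follow from the position of $E$. The zeros of $F$ are $E$ together with the zeros of $\mathcal{P}_n$, with one in each of the $n+1$ intervals determined by $x_{1,n}<\dots<x_{n,n}$. If $E>x_{n,n}$, then $E$ must be the unique zero of $F$ in $(x_{n,n},\infty)$. The $n$ zeros of $\mathcal{P}_n$ then occupy the remaining intervals $(-\infty,x_{1,n}),\dots,(x_{n-1,n},x_{n,n})$, which is $\mathcal{P}_n\prec\mathcal{G}_n$. Symmetrically, if $E<x_{1,n}$, then $E$ occupies $(-\infty,x_{1,n})$ and $\mathcal{P}_n$ has one zero in each of $(x_{1,n},x_{2,n}),\dots,(x_{n,n},\infty)$, which is $\mathcal{G}_n\prec\mathcal{P}_n$.
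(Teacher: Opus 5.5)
Your proof is correct, and it is organized differently from the paper's, and more economically. The paper works with $\mathcal{P}_n$ itself. It shows that $\mathcal{P}_n(x_{k,n})\mathcal{P}_n(x_{k+1,n})>0$ exactly when $E\in(x_{k,n},x_{k+1,n})$, and reads off the sign of $\mathcal{P}_n(x_{n,n})$ from whether $E<x_{n,n}$ or $E>x_{n,n}$. It then treats the three configurations $E<x_{1,n}$, $E>x_{n,n}$ and $x_{j,n}<E<x_{j+1,n}$ separately, with parity bookkeeping in each gap and in the two outer intervals.

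By passing to $F=(x-E)\mathcal{P}_n$ you absorb the factor $x_{k,n}-E$ into a square. The signs $\operatorname{sign}F(x_{k,n})=-\operatorname{sign}\mathcal{Q}_{n-1}(x_{k,n})$ then no longer depend on $E$. A single count of $n+1$ forced sign changes for a polynomial of degree $n+1$ yields $(x-E)\mathcal{P}_n\prec\mathcal{G}_n$ with no case split. The particular statements, and also the paper's third configuration, follow simply by asking which of the $n+1$ slots the zero $E$ of $F$ occupies.

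Your route also makes two things transparent. The no-common-zeros hypothesis is used only to rule out $E=x_{k,n}$. The hypothesis $B(E)\neq 0$ is superfluous, since the count already forces $\mathcal{P}_n(E)\neq 0$. The paper's proof never invokes $B(E)\neq0$ either, but this is less visible there.

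The same device would give Theorem~\ref{MainTheorem1} uniformly as well. For Theorem~\ref{MainTheorem2}, however, the sign pattern of $(x-E)\mathcal{P}_n$ at the zeros of $\mathcal{G}_n$ forces only $n-1$ zeros. That is why a case analysis of the paper's kind is genuinely needed there.
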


The following result is an exact generalisation of \cite[Theorem~3]{Arvesu-Driver-Littlejohn-RamanujanJ-2023}, which was written only for Jacobi polynomials (both consider a mixed recurrence relation with $i=n$, $j=n$, $k=n+1$ and $H(x)=-(x-E)$). Note that, since we obtain a much stronger result than \cite[Theorem~3]{Arvesu-Driver-Littlejohn-RamanujanJ-2023} for interlacing of Jacobi polynomials in \S \ref{Examples Jacobi} using Theorem \ref{MainTheorem2*}, we include Theorem \ref{MainTheorem2} here only for the sake of completeness.  

\begin{theorem}\label{MainTheorem2}
	Suppose the zeros of the monic polynomials $\mathcal{G}_{n}$, $\mathcal{P}_n$, and $\mathcal{Q}_{n+1}$, with degrees $n,n$ and $n+1$ respectively, are all real on an (infinite or finite) interval $(a,b)$. Denote the zeros of $\mathcal{G}_{n}$ and $\mathcal{P}_n$ in ascending order by 
	$\{x_{k,n}\}_{k=1}^{n}$ and $\{z_{k,n}\}_{k=1}^{n}$, respectively.  Assume that $\mathcal{Q}_{n+1}\prec \mathcal{G}_{n}$ and the following relation holds
	\begin{align}\label{GeneralMixedTTRR2}
		A(x)\mathcal{P}_{n}(x) = B(x)\mathcal{G}_{n}(x) - (x-E)\mathcal{Q}_{n+1}(x),
	\end{align}
	where $E \in \mathbb{R}$ and $A(x) > 0$ on the interval $(a,b)$. 
	Assume also that $B(E) \neq 0$ and that the polynomials $\mathcal{G}_{n}$ and $\mathcal{P}_{n}$ have no common zeros. 
	Then, at least $n-2$ zeros of $\mathcal{P}_n$ lie in distinct intervals whose end points are consecutive zeros of $\mathcal{G}_n$.  
	Depending on the position of the point $E$, the remaining two zeros of $\mathcal{P}_n$ gives the following
	\begin{enumerate}
		\item $(x-E)\mathcal{G}_n(x) \prec \mathcal{P}_n(x)$ whenever $
		x_{k',n} < z_{k',n} < E < z_{k'+1,n} < x_{k'+1,n}
		$
		for some $k' \in \{1, 2, \ldots, n-1\}$.
		
		\item $\mathcal{P}_n\prec\mathcal{G}_n$ whenever $
		E < x_{1,n}$ or $\mathcal{G}_n\prec\mathcal{P}_n$ whenever $
		E > x_{n,n}$.
	\end{enumerate}
\end{theorem}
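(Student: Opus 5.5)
Evaluate \eqref{GeneralMixedTTRR2} at the zeros $x_{k,n}$ of $\mathcal{G}_n$, where the $B$-term vanishes:
\[
A(x_{k,n})\mathcal{P}_n(x_{k,n}) = -(x_{k,n}-E)\mathcal{Q}_{n+1}(x_{k,n}).
\]
The argument is then a sign count followed by a counting argument.

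\textbf{Sign of $\mathcal{Q}_{n+1}$ at the zeros of $\mathcal{G}_n$.} Since $\mathcal{Q}_{n+1}\prec\mathcal{G}_n$ and $\mathcal{Q}_{n+1}$ is monic of degree $n+1$, exactly $n+1-k$ zeros of $\mathcal{Q}_{n+1}$ exceed $x_{k,n}$. Hence $\operatorname{sgn}\mathcal{Q}_{n+1}(x_{k,n})=(-1)^{n+1-k}$, so these values alternate in sign as $k$ increases.

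\textbf{Sign of $\mathcal{P}_n$ and the $n-2$ zeros.} Because $A>0$ on $(a,b)$, $\mathcal{P}_n(x_{k,n})$ has sign $(-1)^{n-k}\operatorname{sgn}(x_{k,n}-E)$. None of these values is zero. Indeed, $\mathcal{P}_n(x_{k,n})\neq0$ because $\mathcal{G}_n$ and $\mathcal{P}_n$ have no common zeros, which also forces $E\neq x_{k,n}$ for every $k$.
\begin{itemize}
\item If no point $E$ lies in $(x_{k,n},x_{k+1,n})$, then $\mathcal{P}_n$ changes sign between $x_{k,n}$ and $x_{k+1,n}$. It therefore has an odd number of zeros in that open interval.
\item If $E$ lies in $(x_{k',n},x_{k'+1,n})$, which happens for at most one $k'$, then $\mathcal{P}_n$ has the same sign at both endpoints and hence an even number of zeros there.
\end{itemize}
This gives at least $n-2$ zeros in distinct intervals when $E\in(x_{1,n},x_{n,n})$, and at least $n-1$ otherwise.

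\textbf{Case 1: $E\in(x_{k',n},x_{k'+1,n})$.} The $n-2$ sign-change intervals each contain one zero. The remaining two zeros of $\mathcal{P}_n$ must go somewhere, so I need to show where. Compare the sign of $\mathcal{P}_n$ at $x_{n,n}$ with its sign at $+\infty$, and at $x_{1,n}$ with its sign at $-\infty$.
\begin{itemize}
\item Here $x_{n,n}>E$, so $\mathcal{P}_n(x_{n,n})>0$, which agrees with the monic behaviour at $+\infty$.
\item Likewise $x_{1,n}<E$ gives $\operatorname{sgn}\mathcal{P}_n(x_{1,n})=(-1)^{n}$, which agrees with the behaviour at $-\infty$.
\end{itemize}
So the number of zeros outside $[x_{1,n},x_{n,n}]$ is even on each side. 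Since the total number of zeros is $n$, the two extra zeros cannot split as one outside and one inside. Either both lie in $(x_{k',n},x_{k'+1,n})$, or both lie on one side outside $[x_{1,n},x_{n,n}]$, or the count would have to be redone with two extra in some other interval.

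The hypothesis of part 1 supplies $x_{k',n}<z_{k',n}<E<z_{k'+1,n}<x_{k'+1,n}$ directly. The remaining work is bookkeeping: merge the ordered lists $\{x_{k,n}\}\cup\{E\}$ and $\{z_{k,n}\}$ and check that they alternate, starting with $x_{1,n}$. That is exactly $(x-E)\mathcal{G}_n\prec\mathcal{P}_n$. Indexing of the $z$'s is forced because each interval $(x_{k,n},x_{k+1,n})$ with $k\neq k'$ holds exactly one zero.

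\textbf{Case 2: $E<x_{1,n}$ or $E>x_{n,n}$.} Take $E<x_{1,n}$. All factors $x_{k,n}-E$ are positive, so $\operatorname{sgn}\mathcal{P}_n(x_{k,n})=(-1)^{n-k}$. This gives $n-1$ sign changes, hence one zero in each interval $(x_{k,n},x_{k+1,n})$. Moreover $\mathcal{P}_n(x_{1,n})$ has sign $(-1)^{n-1}$, while $\mathcal{P}_n(x)$ has sign $(-1)^n$ as $x\to-\infty$. So the last zero lies in $(-\infty,x_{1,n})$, giving $\mathcal{P}_n\prec\mathcal{G}_n$.

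For $E>x_{n,n}$, every sign flips: $\mathcal{P}_n(x_{n,n})<0$ while $\mathcal{P}_n\to+\infty$ as $x\to+\infty$. The extra zero therefore lies beyond $x_{n,n}$, giving $\mathcal{G}_n\prec\mathcal{P}_n$.

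The main obstacle is only whether the extra zero lies inside $(a,b)$. This is granted, since all zeros of $\mathcal{P}_n$ are assumed real in $(a,b)$ and the sign of $A$ is used only at the points $x_{k,n}\in(a,b)$.
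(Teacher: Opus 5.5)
Your proof is correct and takes essentially the same route as the paper. You evaluate the mixed relation at the zeros of $\mathcal{G}_n$, use the alternating sign of $\mathcal{Q}_{n+1}$ there (forced by $\mathcal{Q}_{n+1}\prec\mathcal{G}_n$) to get a sign change of $\mathcal{P}_n$ across every gap $(x_{k,n},x_{k+1,n})$ not containing $E$, and then settle the remaining zeros by end-point parity and a zero count. The differences are cosmetic: you track the individual signs $(-1)^{n-k}\,\mathrm{sgn}(x_{k,n}-E)$ rather than products of consecutive values, and you compare with the behaviour at $\pm\infty$ on both sides where the paper uses only the sign at $x_{n,n}$. Also, in Case 1 the ``exactly one zero in each other gap'' you invoke follows from the count $2+(n-2)=n$ once the hypothesis of part 1 places $z_{k',n},z_{k'+1,n}$ in $(x_{k',n},x_{k'+1,n})$.
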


\section{Examples}\label{Examples}

\subsection{Krawtchouk polynomials}\label{Examples Krawtchouk}
Monic Krawtchouk polynomials, denoted by $K_n(x; p, N)$, are defined in terms of the Gauss hypergeometric function as
\begin{equation}\label{Monic-Krawtchouk-2F1}
	K_n(x; p, N) = (-N)_n p^n \, {_2F_1}\left(\begin{matrix} -n, -x \\ -N \end{matrix}; \frac{1}{p}\right).
\end{equation}

These polynomials are orthogonal with respect to the discrete weight function supported on the set $\{0, 1, \dots, N\}$, which is given by the binomial distribution
\begin{equation}\label{Weightfunction-Krawtchouk}
	w(x;p,N) = \binom{N}{x} p^x (1-p)^{N-x}, \quad x \in \{0, 1, \dots, N\},
\end{equation}
with parameters $N \in \mathbb{N}$, $0 < p < 1$, and $0 \leq n \leq N$.
These monic polynomials satisfy the following three-term recurrence relation (cf. \cite[page 237]{Koekoek-Book-HypergeometricOP})
\begin{align} \label{Monic-Krawtchouk-:ttrr}
	\nonumber	K_{n+2}(x;p, N+1) &= (x - (n+1)(1-p) -(N-n)p) K_{n+1}(x; p,N+1) \\
	&\hspace{3cm}- (n+1)p(1-p)(N+1-n) K_n(x; p,N+1)
\end{align}
Next, we derive a mixed recurrence relation for monic Krawtchouk polynomials with parameters $N$ and $N+1$. This relation explicitly identifies a linear factor governing the interlacing of zeros. 

\begin{lemma}
	Let $K_n(x; p, N)$ denote monic Krawtchouk polynomials of degree $n$ with parameters $0 < p < 1, N\in \mathbb{N}$ and $n\leq N$. Then the  following mixed recurrence relation holds	\begin{multline}\label{Mixed-TTRR-linear-factor-interlacing-Krawtchouk-n(N+1)-(n+1)N}
		p(1-p)(n+1)(N+1-n)K_n(x; p, N+1) \\
		= (N+1-x)K_{n+1}(x; p, N) + \Big(x - (N+1) + p(n+1)\Big)K_{n+1}(x; p, N+1).
	\end{multline}
\end{lemma}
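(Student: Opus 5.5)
The plan is to verify the identity by expanding both sides in the Newton-type basis $\{(-x)_k\}_{k\ge0}$ and comparing coefficients. The hypergeometric representation \eqref{Monic-Krawtchouk-2F1} gives this expansion directly:
\begin{equation*}
K_m(x;p,M)=\sum_{k=0}^{m} a_k(m,M)\,(-x)_k,\qquad a_k(m,M)=\frac{(-M)_m\,p^{m-k}\,(-m)_k}{(-M)_k\,k!}.
\end{equation*}
We need this for $(m,M)=(n,N+1)$, $(n+1,N)$ and $(n+1,N+1)$. The case $(n+1,N)$ requires $n+1\le N$. So I would first prove the identity for $n<N$, and treat the boundary case $n=N$ separately, either by interpreting $(-N)_{n+1}p^{n+1}\,{}_2F_1$ as the limiting polynomial or by noting that this term is what the identity defines.

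The multiplications by linear factors are handled by the elementary relations
\begin{equation*}
(-x)_{k+1}=(k-x)(-x)_k,\qquad x(-x)_k=k(-x)_k-(-x)_{k+1},\qquad (N+1-x)(-x)_k=(N+1-k)(-x)_k+(-x)_{k+1}.
\end{equation*}
Applying these to the right-hand side of \eqref{Mixed-TTRR-linear-factor-interlacing-Krawtchouk-n(N+1)-(n+1)N}, the coefficient of $(-x)_k$ there is
\begin{equation*}
(N+1-k)\,a_k(n+1,N)+a_{k-1}(n+1,N)+\bigl(k-(N+1)+p(n+1)\bigr)a_k(n+1,N+1)-a_{k-1}(n+1,N+1).
\end{equation*}

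The first check is the top degree. At $k=n+2$ the contribution is $a_{n+1}(n+1,N)-a_{n+1}(n+1,N+1)$, and both of these equal $1$ by monicity, so it vanishes. This matches the left-hand side, which has degree $n$. For the remaining $k\le n+1$, I would divide every term by a common hypergeometric factor, for instance $(-n-1)_{k-1}p^{n+1-k}/\bigl((-N-1)_k\,k!\bigr)$, times the ratio $(-N)_{n+1}/(-N-1)_{n+1}=(N-n)/(N+1)$ where it appears. The claimed identity then reduces to a polynomial identity in $k$ (and $n,N,p$) of low degree. The left-hand coefficient $p(1-p)(n+1)(N+1-n)\,a_k(n,N+1)$ becomes a similarly simple expression after the same normalisation, since $(-n)_k/(-n-1)_k=(n+1-k)/(n+1)$.

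The main obstacle is this bookkeeping: keeping the Pochhammer ratios straight so that the reduced identity is visibly true. Useful intermediate facts are $(-N)_k/(-N-1)_k=(N+1-k)/(N+1)$ and $(-N)_{n+1}/(-N-1)_{n+1}=(N-n)/(N+1)$. A cross-check, in case the algebra gets messy, is to compare both sides at the $n+2$ points $x=0,1,\dots,n+1$. At these points $(N+1-x)$ and the Krawtchouk values are explicit terminating sums, and two polynomials of degree $\le n+1$ that agree there must coincide. I expect the coefficient route to close cleanly, with the $k=0$ and $k=n+1$ edge coefficients checked separately.
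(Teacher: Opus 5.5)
Your proposal is correct, but it takes a different route from the paper.

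\textbf{The paper's route.} The paper derives the identity structurally. From $w(x;p,N)=\frac{N+1-x}{(N+1)(1-p)}\,w(x;p,N+1)$, Christoffel's kernel-polynomial formula and the evaluation $K_k(N+1;p,N+1)=k!\binom{N+1}{k}(1-p)^k$ give $(N+1-x)K_{n+1}(x;p,N)=(N-n)(1-p)K_{n+1}(x;p,N+1)-K_{n+2}(x;p,N+1)$. Eliminating $K_{n+2}$ with the three-term recurrence then finishes the proof in one line.

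\textbf{Your route.} You verify the identity coefficientwise in the basis $(-x)_k$, using only the ${}_2F_1$ representation, and this does close. Use the two relations $a_k(n+1,N)=\frac{N-n}{N+1-k}\,a_k(n+1,N+1)$ and $a_{k-1}(n+1,N+1)=\frac{pk(N+2-k)}{n+2-k}\,a_k(n+1,N+1)$. With these, the right-hand coefficient of $(-x)_k$ collapses to $-(1-p)(n+1-k)\,a_k(n+1,N+1)$. This equals $p(1-p)(n+1)(N+1-n)\,a_k(n,N+1)$, because $a_k(n+1,N+1)=-\frac{p(n+1)(N+1-n)}{n+1-k}\,a_k(n,N+1)$ for $k\le n$. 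One small slip: the coefficient of $(-x)_{n+1}$ in a monic polynomial of degree $n+1$ is $(-1)^{n+1}$, not $1$. The top-degree cancellation is unaffected, since the two coefficients are still equal.

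\textbf{Trade-offs.} The paper's argument explains where the factor $N+1-x$ and the point $E_{n,p,N}$ come from, and it carries over unchanged to the Meixner case. It does, however, rely on orthogonality and Christoffel's theorem. Your argument is elementary and orthogonality-free, but it only verifies a formula you must already know.

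\textbf{The case $n=N$.} Your approach also handles this case more transparently than the paper, whose proof implicitly uses the degenerate polynomials $K_{N+1}(x;p,N)$ and $K_{N+2}(x;p,N+1)$. Here you should discard the circular option ``this term is what the identity defines.'' Instead adopt the limiting interpretation $a_k(N+1,N)=(-1)^{N+1}\delta_{k,N+1}$, i.e.\ $K_{N+1}(x;p,N)=x(x-1)\cdots(x-N)$. Under that interpretation the same coefficient check goes through without change.
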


\begin{proof}
	We use the Christoffel transformation \cite[page 35]{Chiharabook} connecting Krawtchouk measures with parameters $N$ and $N+1$. Given that the weight function corresponding to Krawtchouk  polynomials  satisfies
	\begin{equation*}
		w(x; p, N) = \frac{N+1-x}{(N+1)(1-p)} w(x; p, N+1),
	\end{equation*}
	the monic polynomial of degree $n+1$ evaluated at $a = N+1$, orthogonal with respect to $w(x; p, N)$, is expressed as 	
	$\displaystyle 	K_{n+1}(x; p, N) $
	\begin{equation}\label{Kernel-Krawtchouk}
		= \frac{1}{N+1-x} \left( -K_{n+2}(x; p, N+1) + \frac{K_{n+2}(N+1; p, N+1)}{K_{n+1}(N+1; p, N+1)} K_{n+1}(x; p, N+1) \right).
	\end{equation}
	Employing the evaluation
	\begin{equation*}
		K_k(N+1; p, N+1) = k! \binom{N+1}{k} (1-p)^k,
	\end{equation*}
	this simplifies  \eqref{Kernel-Krawtchouk} and we may write the relation as
	\begin{equation} \label{eq:christoffel}
		(N+1-x) K_{n+1}(x; p, N) = (N-n)(1-p) K_{n+1}(x; p, N+1) - K_{n+2}(x; p, N+1).
	\end{equation}
	Substituting \eqref{Monic-Krawtchouk-:ttrr} into \eqref{eq:christoffel} to eliminate $K_{n+2}$ and grouping the terms for $K_{n+1}(x;p, N+1)$ and $K_n(x; p,N+1)$ gives
	\begin{align*}
		\nonumber	&(N+1-x) K_{n+1}(x;p, N) = (n+1)p(1-p)(N+1-n) K_n(x;p, N+1) \\&\hspace{1.5cm}+ \Big[ (N-n)(1-p) - x + (n+1)(1-p) +(N-n)p)  \Big] K_{n+1}(x;p, N+1).
	\end{align*}
	Finally, we simplify the bracketed term $[\cdot]$ and rearrange to obtain \eqref{Mixed-TTRR-linear-factor-interlacing-Krawtchouk-n(N+1)-(n+1)N}.
\end{proof}
Note that the mixed recurrence relation \eqref{eq:christoffel}, with the index
$n+1$ replaced by $n$, satisfies all the hypotheses listed in the second row of
Table~\ref{Summary-Mixed-recurrence}. Hence, as proved in \cite{Jordaan-Tookos-ANM-2009}, $K_{n+1}(x;p,N+1)\prec K_{n}(x;p,N)$
for any $0<p<1$, $N\in\mathbb{N}$,
and $n\in\{0,1,\ldots,N\}$.
We next examine the conditions under which the
zeros of $K_{n}(x;p,N+1)$ and $K_{n+1}(x;p,N)$ also interlace.

\begin{corollary}
	Let $\{x_{k,n+1}\}_{k=1}^{n+1}$ denote the zeros of the monic Krawtchouk polynomial $K_{n+1}(x; p, N)$ defined in \eqref{Monic-Krawtchouk-2F1}, in ascending order.  Then, $x_{1,n+1}<E_{n,p,N}:= N+1-p(n+1)$ and for any $0<p<1, N\in \mathbb{N}$ and $n\in \{0,1,...,N\}$, $K_{n+1}(x; p, N)\prec (x-E_{n,p,N})K_n(x; p, N+1)$. Moreover, $K_{n+1}(x; p, N)\prec K_n(x; p, N+1)$ if and only if the largest zero of $K_{n+1}(x; p, N)$ is less than the point $E_{n,p,N}$ (i.e. $x_{n+1,n+1}<E_{n,p,N}$).
\end{corollary}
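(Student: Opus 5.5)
We plan to obtain the corollary as a direct application of Theorem~\ref{MainTheorem1}, part 2, to the mixed recurrence relation \eqref{Mixed-TTRR-linear-factor-interlacing-Krawtchouk-n(N+1)-(n+1)N}. First, divide \eqref{Mixed-TTRR-linear-factor-interlacing-Krawtchouk-n(N+1)-(n+1)N} by the positive constant $p(1-p)(n+1)(N+1-n)$, which is positive for $0<p<1$ and $n\le N$. This puts it in the form \eqref{GeneralMixedTTRR} with the identifications
\[
\mathcal{P}_n=K_n(x;p,N+1),\quad \mathcal{G}_{n+1}=K_{n+1}(x;p,N),\quad \mathcal{Q}_{n+1}=K_{n+1}(x;p,N+1),
\]
\[
B(x)=N+1-x,\quad E=E_{n,p,N}=N+1-p(n+1),
\]
and with $A(x)$ equal to that positive constant (or simply the constant itself, with $B$ and $x-E$ rescaled accordingly). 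The sign of $A$ is what matters, and $A>0$ on any interval.

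Next we verify the hypotheses. All three polynomials are monic orthogonal (Krawtchouk) polynomials, so their zeros are real, simple and lie in $(0,N+1)$. We can take $(a,b)=(0,N+1)$ or any larger interval. The interlacing hypothesis $\mathcal{G}_{n+1}\prec\mathcal{Q}_{n+1}$ is exactly $K_{n+1}(x;p,N)\prec K_{n+1}(x;p,N+1)$. This holds because $K_{n+1}(x;p,N)$ is the Christoffel transform of the $N+1$ family at the point $N+1$, which lies to the right of the support. The standard fact is that the zeros of the Christoffel-modified polynomial at a point to the right of the support move left, strictly interlacing with the original zeros. It can also be read off from \eqref{eq:christoffel} as in the second row of Table~\ref{Summary-Mixed-recurrence}, with index $n+1$.

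For the condition $B(E)\neq0$, note that $B(E)=N+1-E=p(n+1)>0$. For the absence of common zeros, suppose $K_{n+1}(x;p,N)$ and $K_n(x;p,N+1)$ share a zero $y$. Then \eqref{Mixed-TTRR-linear-factor-interlacing-Krawtchouk-n(N+1)-(n+1)N} gives $(y-E)K_{n+1}(y;p,N+1)=0$. Since consecutive-degree interlacing forbids $K_{n+1}(\cdot;p,N+1)$ and $K_{n}(\cdot;p,N+1)$ from sharing zeros, $K_{n+1}(y;p,N+1)\neq 0$, and we would need $y=E$. So the remaining step is to exclude $E$ being a common zero. The obstacle is that this might happen for special $p$. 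I would handle it by noting that Theorem~\ref{MainTheorem1} only needs the sign analysis, so a common zero at $E$ can be treated as a degenerate case. Alternatively, I would argue that $K_{n+1}(E;p,N)\ne0$ by evaluating the three-term recurrence, which I expect to be the most delicate point. If it cannot be ruled out in general, the statement should be read with this generic assumption.

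Theorem~\ref{MainTheorem1}(2) then gives $E>x_{1,n+1}$ and $K_{n+1}(x;p,N)\prec(x-E)K_n(x;p,N+1)$. It also gives $K_{n+1}(x;p,N)\prec K_n(x;p,N+1)$ if and only if $E>x_{n+1,n+1}$, which is precisely the statement of the corollary.
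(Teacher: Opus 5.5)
Your route is the same as the paper's: apply Theorem~\ref{MainTheorem1}(2) to \eqref{Mixed-TTRR-linear-factor-interlacing-Krawtchouk-n(N+1)-(n+1)N}. The identifications are $\mathcal{G}_{n+1}=K_{n+1}(x;p,N)$, $\mathcal{Q}_{n+1}=K_{n+1}(x;p,N+1)$, $\mathcal{P}_n=K_n(x;p,N+1)$, $B(x)=N+1-x$, and the constant $A=p(1-p)(n+1)(N+1-n)>0$. No division is needed, since the relation is already in the required form. Where you use the Christoffel zero-shift, the paper cites the literature for $K_{n+1}(x;p,N)\prec K_{n+1}(x;p,N+1)$. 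The paper then simply asserts that all hypotheses hold. You are right that the no-common-zeros hypothesis is the one step that needs an argument, and your reduction is correct: at $x=E$ the relation reads $A\,K_n(E;p,N+1)=p(n+1)\,K_{n+1}(E;p,N)$. So the hypothesis holds if and only if $K_{n+1}(E_{n,p,N};p,N)\neq 0$.

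Neither of your proposed fixes can work, however, because this condition genuinely fails for some $p$, and there the corollary's conclusion is false. For $n=1$, the zero $(N+1)p$ of $K_1(x;p,N+1)$ equals $E_{1,p,N}=N+1-2p$ exactly when $p=(N+1)/(N+3)\in(0,1)$. Concretely, take $N=2$, $p=3/5$. Then $E=9/5$, $K_1(x;p,3)=x-9/5$ and $K_2(x;p,2)=(x-2/5)(x-9/5)$. Hence $(x-E)K_1(x;p,3)=(x-9/5)^2$ has a double zero sitting on a zero of $K_2(x;p,2)$, and $K_2(x;p,2)\prec (x-E)K_1(x;p,3)$ fails. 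The same happens for $n=N=2$, $p\in\{1/3,2/3\}$.

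So $K_{n+1}(E;p,N)\neq0$ cannot be derived from the recurrence. Nor can the coincidence be treated as a harmless degenerate case of Theorem~\ref{MainTheorem1}, because the interlacing conclusion itself breaks there. This is a real gap, and the paper's proof has it too: the corollary needs the extra hypothesis $K_{n+1}(E_{n,p,N};p,N)\neq0$. For fixed $N$ and $n\ge 1$ this excludes only finitely many $p$. Indeed, $p\mapsto K_n(E_{n,p,N};p,N+1)$ is a polynomial in $p$ whose value at $p=0$ is $(N+1)N\cdots(N+2-n)\neq 0$. Your fallback of reading the statement with this generic assumption is the correct resolution, and with it the remaining steps of your plan go through.
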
 
\begin{proof}
	Let $\{y_{k,n+1}\}_{k=1}^{n+1}$ denote the zeros of the polynomial 
	$K_{n+1}(x; p, N+1)$. 
	The monic Krawtchouk polynomial $K_n(x; p, N)$ satisfies the mixed three-term recurrence 
	relation given in \eqref{Mixed-TTRR-linear-factor-interlacing-Krawtchouk-n(N+1)-(n+1)N}.	Define
	\begin{align}
		\mathcal{G}_{n+1}(x):=K_{n+1}(x; p, N), ~
		\mathcal{Q}_{n+1}(x):=K_{n+1}(x; p, N+1), ~
		\mathcal{P}_n(x):=K_n(x; p, N),
	\end{align}and 
	\begin{align}
		A(x):=p(1-p)(n+1)(N+1-n), \qquad 
		B(x):=N+1-x.
	\end{align}
	Clearly, $A(x)>0$ and $B(E_{n,p,N})\neq 0$ for all $ n \le N$ and $0<p<1$.
	Furthermore, $K_{n+1}(x; p, N)\prec K_{n+1}(x; p, N+1)$ for any $N\in\mathbb{N}$, $0<p<1$, and $0\le n \le N$ \cite[Theorem 4.1]{Jordaan-Tookos-ANM-2009}.
	Hence, all the hypotheses of Theorem~\ref{MainTheorem1} are satisfied, 
	and the result follows directly from Theorem~\ref{MainTheorem1}(2).
\end{proof}

\subsection{Meixner polynomials} \label{Examples Meixner} We define monic Meixner polynomials, denoted by $M_n(x; t, w)$, using the hypergeometric representation as follows
\begin{equation}
	M_n(x; t, w) = \frac{(t)_n w^n}{(w-1)^n} \, {}_2F_1\left(\begin{matrix} -n, -x \\ t \end{matrix} ; 1 - \frac{1}{w}\right),
\end{equation}
where the parameter $t > 0$ and $0<w<1$. These polynomials form an orthogonal system with respect to the discrete weight function 
\begin{equation}\label{Meixner-Weight}
	\rho(x; t, w) = \frac{(t)_x w^x}{x!},
\end{equation} supported on the non-negative integers $x \in \{0, 1, 2, \dots\}$. Monic Meixner polynomials satisfy the three-term recurrence relation (cf. \cite[page 234]{Koekoek-Book-HypergeometricOP})

$\displaystyle  M_{n+2}(x; t, w)$
\begin{align} \label{TTRR-Monic-Meixner}
	=
	\left(x+\frac{n+1+w(n+t+1)}{w-1}\right) M_{n+1}(x; t, w) -\frac{w(n+1)(n+t)}{(w-1)^2} M_{n}(x; t, w).
\end{align}
We now derive a mixed recurrence relation using the standard three-term recurrence \eqref{TTRR-Monic-Meixner} and the Christoffel-transformed polynomials. This relation plays a crucial role in establishing the conditions under which the zeros of $M_{n}(x; t, w)$ and $M_{n+1}(x; t+1, w)$  interlace.

\begin{lemma}
	Let  $M_{n}(x; t, w)$ denote the monic Meixner polynomial of degree $n$ with parameters $t>0$ and $0<w<1$.	These polynomials satisfy the following mixed recurrence relation
	
	$\displaystyle \frac{w(n+1)(n+t)}{(w-1)^2} M_n(x; t, w)$
	\begin{align} \label{Mixed-TTRR-linear-factor-interlacing-Meixner-n(w)-(n+1)(w+1)}
		=
		- (x+t) M_{n+1}(x; t+1, w) +\left(x+t + \frac{w(n+1)}{w-1} \right) M_{n+1}(x; t, w) 
	\end{align}
\end{lemma}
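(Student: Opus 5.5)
The plan is to follow the proof of the Krawtchouk mixed recurrence: use a Christoffel transformation linking the Meixner weights with parameters $t$ and $t+1$, then eliminate $M_{n+2}(x;t,w)$ with the three-term recurrence \eqref{TTRR-Monic-Meixner}.

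\emph{Step 1 (relating the weights).} From \eqref{Meixner-Weight} and $(t+1)_x=(t)_x\,(t+x)/t$ we get
\[
\rho(x;t+1,w)=\frac{x+t}{t}\,\rho(x;t,w).
\]
So the measure for parameter $t+1$ is the Christoffel modification of the measure for parameter $t$ by the linear factor $x-a$ with $a=-t$. This factor is positive on the support $\{0,1,2,\dots\}$.

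\emph{Step 2 (the kernel formula).} The standard Christoffel formula \cite[page 35]{Chiharabook} then gives
\[
(x+t)\,M_{n+1}(x;t+1,w)=M_{n+2}(x;t,w)-\frac{M_{n+2}(-t;t,w)}{M_{n+1}(-t;t,w)}\,M_{n+1}(x;t,w).
\]
To evaluate $M_k(-t;t,w)$, note that at $x=-t$ the upper and lower parameters $t$ cancel in the ${}_2F_1$. The series therefore reduces to $(1-z)^k$ with $z=1-1/w$, which equals $w^{-k}$. Hence
\[
M_k(-t;t,w)=\frac{(t)_k}{(w-1)^k},
\]
and the ratio in the kernel formula is $(n+t+1)/(w-1)$.

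\emph{Step 3 (eliminating $M_{n+2}$).} Substitute \eqref{TTRR-Monic-Meixner} for $M_{n+2}(x;t,w)$. The coefficient of $M_{n+1}(x;t,w)$ becomes
\[
x+\frac{n+1+w(n+t+1)-(n+t+1)}{w-1}=x+\frac{w(n+t+1)-t}{w-1}.
\]
Writing this over the common denominator shows it equals $x+t+\frac{w(n+1)}{w-1}$, which is exactly the coefficient in \eqref{Mixed-TTRR-linear-factor-interlacing-Meixner-n(w)-(n+1)(w+1)}. The $M_n(x;t,w)$ term enters with coefficient $-\frac{w(n+1)(n+t)}{(w-1)^2}$. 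Moving it to the left-hand side and $(x+t)M_{n+1}(x;t+1,w)$ to the right gives the stated relation.

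\emph{Main obstacle.} The only delicate point is getting the sign conventions right in the monic Christoffel formula. With $a=-t$, the left factor is $x-a=x+t$, which fixes the sign of the ratio term. This sign, together with the evaluation $M_k(-t;t,w)$, must be tracked carefully so that the linear coefficient comes out exactly as stated. As a safeguard, I would confirm the final identity at $n=0$ using $M_0=1$ and $M_1(x;t,w)=x+\frac{wt}{w-1}$.
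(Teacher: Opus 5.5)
Your proposal is correct and follows essentially the same route as the paper: the Christoffel transformation at $a=-t$ linking $\rho(x;t+1,w)$ to $\rho(x;t,w)$, then elimination of $M_{n+2}(x;t,w)$ via the three-term recurrence. Your explicit evaluation $M_k(-t;t,w)=(t)_k/(w-1)^k$ and your $n=0$ check fill in details the paper leaves implicit, and both are right.
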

\begin{proof}
	The Christoffel transformation \cite[page 35]{Chiharabook} applied to the weight \eqref{Meixner-Weight} at the point $a = -t$ yields
	\begin{align}\label{Christoffel-Weight-Meixner}
		\rho(x; t+1, w) = \frac{x + t}{t} \rho(x; t, w).
	\end{align}
	The corresponding Christoffel-transformed polynomials are given by
	\begin{align*}
		M_{n+1}(x; t+1, w) = \frac{1}{x + t} \left[ M_{n+2}(x; t, w) - \frac{n + t + 1}{w - 1} M_{n+1}(x; t, w) \right].
	\end{align*}
	Substituting the three-term recurrence \eqref{TTRR-Monic-Meixner} into this relation to eliminate the term of degree $n+2$, and subsequently collecting coefficients of $M_{n+1}(x; t, w)$ and $M_n(x; t, w)$, we arrive at the mixed recurrence \eqref{Mixed-TTRR-linear-factor-interlacing-Meixner-n(w)-(n+1)(w+1)}. This completes the proof.
\end{proof}
\begin{corollary}
	Let $\{x_{k,n+1}\}_{k=1}^{n+1}$ denote the zeros of $M_{n+1}(x; t+1, w)$ in ascending order. Define
	$
	E_{n,t,w} := -t + \frac{w(n+1)}{1-w}.
	$
	Then, for any $t > 0$ and $0 < w < 1$, we have $x_{n+1,n+1} > E_{n,t,w}$, and $(x - E_{n,t,w}) M_{n}(x; t, w)\prec M_{n+1}(x; t+1, w)$. In particular, $ M_{n+1}(x; t+1, w)\prec M_{n}(x; t, w)$ if and only if  $x_{1,n+1}>E_{n,t,w}$.
\end{corollary}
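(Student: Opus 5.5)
The plan is to apply Theorem~\ref{MainTheorem1}, part~(1), to the mixed recurrence \eqref{Mixed-TTRR-linear-factor-interlacing-Meixner-n(w)-(n+1)(w+1)}. First I will rewrite that relation in the form \eqref{GeneralMixedTTRR}. Note that
\[
x+t+\frac{w(n+1)}{w-1} = x - \Big(-t + \frac{w(n+1)}{1-w}\Big) = x - E_{n,t,w}.
\]
Hence the relation reads $A(x)M_n(x;t,w) = B(x)\mathcal{G}_{n+1}(x) + (x-E_{n,t,w})\mathcal{Q}_{n+1}(x)$ with the following choices:
\[
\mathcal{G}_{n+1}(x):=M_{n+1}(x;t+1,w),\quad \mathcal{Q}_{n+1}(x):=M_{n+1}(x;t,w),\quad \mathcal{P}_n(x):=M_n(x;t,w),
\]
\[
A(x):=\frac{w(n+1)(n+t)}{(w-1)^2},\qquad B(x):=-(x+t).
\]
The constant $A$ is strictly positive because $t>0$ and $0<w<1$. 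Also $B(E_{n,t,w}) = -\frac{w(n+1)}{1-w}\neq 0$. All zeros lie in the interval $(a,b)=(0,\infty)$, since the Meixner weight is supported on the nonnegative integers.

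The key hypothesis to check is $\mathcal{Q}_{n+1}\prec\mathcal{G}_{n+1}$, that is, $M_{n+1}(x;t,w)\prec M_{n+1}(x;t+1,w)$. The zeros move to the right as $t$ increases, and I would justify this in one of two ways.
\begin{enumerate}
\item Cite \cite[Theorem 4.1]{Jordaan-Tookos-ANM-2009} or the analogous known Meixner result, as was done for Krawtchouk.
\item Derive it directly from the Christoffel relation $(x+t)M_{n+1}(x;t+1,w) = M_{n+2}(x;t,w) - \frac{n+t+1}{w-1}M_{n+1}(x;t,w)$. This relation has the form of the second row of Table~\ref{Summary-Mixed-recurrence}, with $A=1$, nonzero coefficients, and consecutive-degree interlacing $M_{n+2}\prec M_{n+1}$. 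Lemma~1.1 of \cite{Jordaan-Tookos-ANM-2009} then gives $M_{n+2}(x;t,w)\prec M_{n+1}(x;t+1,w)$ together with the ordering between $M_{n+1}(\cdot;t+1,w)$ and $M_{n+1}(\cdot;t,w)$.
\end{enumerate}
To fix the direction of that ordering I would evaluate the Christoffel relation at the zeros of $M_{n+1}(x;t,w)$. There $(x+t)>0$ and $M_{n+2}$ alternates in sign, which shows that each zero of $M_{n+1}(\cdot;t+1,w)$ lies to the right of the corresponding zero of $M_{n+1}(\cdot;t,w)$.

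I also need that $\mathcal{G}_{n+1}$ and $\mathcal{P}_n$ have no common zeros. Suppose $M_{n+1}(\zeta;t+1,w)=M_n(\zeta;t,w)=0$. The mixed relation then forces $(\zeta-E_{n,t,w})M_{n+1}(\zeta;t,w)=0$. Here $M_{n+1}(\zeta;t,w)\neq0$, because $M_n$ and $M_{n+1}$ with the same parameters have no common zeros. So $\zeta=E_{n,t,w}$. Substituting $\zeta$ into the three-term recurrence and the Christoffel identity would then give a contradiction, or at worst the possibility is excluded generically; this step is the main obstacle. The cleaner route is to note that a common zero $\zeta$ would make the Christoffel relation give $M_{n+2}(\zeta;t,w)=\frac{n+t+1}{w-1}M_{n+1}(\zeta;t,w)$. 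Combined with the recurrence and $M_n(\zeta;t,w)=0$, this yields $\zeta + \frac{n+1+w(n+t+1)}{w-1} = \frac{n+t+1}{w-1}$, i.e. $\zeta=-t+\frac{w(n+1)}{1-w}\cdot$(after simplification) $=E_{n,t,w}$. I would then argue that this is consistent only if $M_n(E;t,w)=0$, and handle that degenerate case separately if needed.

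Once the hypotheses are verified, Theorem~\ref{MainTheorem1}(1) gives the conclusions directly:
\begin{enumerate}
\item $E_{n,t,w}<x_{n+1,n+1}$;
\item $(x-E_{n,t,w})M_n(x;t,w)\prec M_{n+1}(x;t+1,w)$;
\item $M_{n+1}(x;t+1,w)\prec M_n(x;t,w)$ if and only if $E_{n,t,w}<x_{1,n+1}$.
\end{enumerate}
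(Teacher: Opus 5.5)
Your plan is the paper's proof: the same choices of $\mathcal{G}_{n+1},\mathcal{Q}_{n+1},\mathcal{P}_n,A,B$, followed by an appeal to Theorem~\ref{MainTheorem1}(1). Your checks are correct: $A>0$, $B(E_{n,t,w})=-\frac{w(n+1)}{1-w}\neq 0$, and your self-contained proof that $M_{n+1}(x;t,w)\prec M_{n+1}(x;t+1,w)$. That proof evaluates the Christoffel identity at the zeros of $M_{n+1}(x;t,w)$, where $x+t>0$ and $M_{n+2}(x;t,w)$ alternates in sign and is negative at the largest zero. The paper instead cites \cite[Corollary 2.2]{Jordaan-Tookos-ANM-2009}; Theorem~4.1 there is what it uses for Krawtchouk. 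The genuine gap is the step you flagged yourself: that $M_{n+1}(x;t+1,w)$ and $M_n(x;t,w)$ have no common zero. This step cannot be closed.

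Both of your computations only show that a common zero must equal $E_{n,t,w}$. Conversely, evaluating the mixed relation at $x=E_{n,t,w}$ gives $A\,M_n(E_{n,t,w};t,w)=-(E_{n,t,w}+t)\,M_{n+1}(E_{n,t,w};t+1,w)$, with $E_{n,t,w}+t=\frac{w(n+1)}{1-w}>0$. So the hypothesis holds exactly when $M_n(E_{n,t,w};t,w)\neq 0$, and this can fail.

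Since ${}_2F_1(-m,-1;s;z)=1+\frac{mz}{s}$, and $1-1/w=-1$ for $w=\frac12$, we get $M_m(1;s,\frac12)=0$ if and only if $s=m$. Taking $t=n$ and $w=\frac12$ gives $E_{n,n,1/2}=1$ and $M_n(1;n,\frac12)=M_{n+1}(1;n+1,\frac12)=0$. For $n=1$ explicitly: $M_1(x;1,\frac12)=x-1$, $M_2(x;2,\frac12)=(x-1)(x-6)$, and $E=1$. Then $(x-E)M_1(x;1,\frac12)=(x-1)^2$ does not strictly interlace with $(x-1)(x-6)$.

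So the corollary is false as stated for these admissible $t>0$, $0<w<1$. No separate treatment of the degenerate case can rescue it. One must add the hypothesis $M_n(E_{n,t,w};t,w)\neq 0$, equivalently that $E_{n,t,w}$ is not a zero of $M_{n+1}(x;t+1,w)$. The paper's proof has the same omission: it asserts that all hypotheses of Theorem~\ref{MainTheorem1} hold without checking this one. With that hypothesis added, your argument is complete and agrees with the paper's.
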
 
\begin{proof}
	Monic Meixner polynomials $M_n(x; t, w)$ satisfy the mixed three-term recurrence 
	relation given in \eqref{Mixed-TTRR-linear-factor-interlacing-Meixner-n(w)-(n+1)(w+1)}.	Define
	\begin{align}
		\mathcal{G}_{n+1}(x):=M_{n+1}(x; t+1, w), ~
		\mathcal{Q}_{n+1}(x):=M_{n+1}(x; t, w), ~
		\mathcal{P}_n(x):=M_n(x; t, w),
	\end{align}and 
	\begin{align}
		A(x):=\frac{w(n+1)(n+t)}{(w-1)^2}>0, \qquad 
		B(x):=-(x+t).
	\end{align}
	Clearly, $B(E_{n,t,w})\neq 0$ for all $ t >0$ and $0<w<1$.
	Furthermore, as established in \cite[Corollary 2.2]{Jordaan-Tookos-ANM-2009} using Markov's monotonicity theorem, $M_{n+1}(x; t, w) \prec M_{n+1}(x; t+1, w)$ for $0 < w < 1$ and $t > 0$.
	Hence, all the hypotheses of Theorem~\ref{MainTheorem1} are satisfied, 
	and the result follows directly from Theorem~\ref{MainTheorem1}(1).
\end{proof}

\subsection{Narayana Polynomials}\label{Examples Naryana}
Narayana polynomials, denoted by $N_n(x)$, are given by \cite{Kostov-Finkelshtein-Shapiro-NarayanaNumbers-JAT-2009,Liu-Wang-Unified}
\begin{equation}
	N_n(x) = \sum_{k=1}^n c_{n,k} x^k, \quad \text{where } c_{n,k} = \frac{1}{n}\binom{n}{k}\binom{n}{k-1}.
\end{equation}
The coefficients $c_{n,k}$ are known as the Narayana numbers. We define reduced Narayana polynomials~\cite{Kostov-Finkelshtein-Shapiro-NarayanaNumbers-JAT-2009} as
\begin{equation}
	\mathcal{N}_n(x) = \frac{N_n(x)}{x} = \sum_{j=0}^{n-1}c_{n, j+1} x^j.
\end{equation}
The polynomial $\mathcal{N}_n(x)$ is of degree $n-1$, and $x=0$ is not a zero for any $n\geq1$. Reduced Narayana polynomials are reciprocal, satisfying the identity 
\begin{align}\label{reciprocal-identity-ReducedNarayana}
	\mathcal{N}_n(x) = x^{n-1}\mathcal{N}_n\left(\frac{1}{x}\right). 
\end{align}
Furthermore, the zeros of $\mathcal{N}_n(x)$ lie in the interval $(-\infty,0)$~\cite[Corollary 7]{Kostov-Finkelshtein-Shapiro-NarayanaNumbers-JAT-2009}.

We define the Christoffel-transformed polynomial related to reduced Narayana polynomials as
\begin{equation}\label{CT-Narayana-Polynomial}
	\tilde{\mathcal{N}}_n(x) = \sum_{j=0}^{n-1} \tilde{c}_{n,j} x^j= \frac{\mathcal{N}_{n+1}(x) - \rho_n \mathcal{N}_n(x)}{x - 1}, \quad \text{with } \rho_n = \frac{\mathcal{N}_{n+1}(1)}{\mathcal{N}_n(1)}.
\end{equation}
This example is of particular interest because reduced Narayana polynomials fall outside the classical orthogonal framework considered in the previous subsections. 

Subsequently, we demonstrate that the interlacing property is preserved under the perturbation in the coefficients of reduced Narayana polynomials defined in \eqref{CT-Narayana-Polynomial}. Specifically, we show that the zeros of reduced Narayana polynomials interlace with the zeros of the polynomials generated by these perturbed coefficients.

\begin{corollary}\label{cor:interlace_narayana}
	Let $\mathcal{N}_{n}(x)$ be the monic reduced Narayana polynomial of degree $n-1$, and let $\tilde{\mathcal{N}}_{n}(x)$ be the polynomial associated with the perturbed coefficients defined in \eqref{CT-Narayana-Polynomial}. Then, $\tilde{\mathcal{N}}_{n}(x)\prec \mathcal{N}_{n}(x)$.
\end{corollary}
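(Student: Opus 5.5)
The plan is to apply Theorem~\ref{MainTheorem2*} with the extra point $E=1$. To do this I will rewrite the defining relation \eqref{CT-Narayana-Polynomial} as a mixed recurrence of the form \eqref{GeneralMixedTTRR2*}, in which $\mathcal{G}$ is $\mathcal{N}_n$ (degree $n-1$) and $\mathcal{Q}$ is $\mathcal{N}_{n-1}$ (degree $n-2$). The degree index of Theorem~\ref{MainTheorem2*} is then $n-1$ rather than $n$.

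\textbf{Step 1: a three-term recurrence.} I will use the classical three-term recurrence for Narayana polynomials,
\begin{equation*}
(n+1)N_{n+1}(x)=(2n-1)(1+x)N_n(x)-(n-2)(1-x)^2N_{n-1}(x).
\end{equation*}
I will take this from the literature cited in \S\ref{Examples Naryana}, or else verify it directly on the coefficients $c_{n,k}$. Dividing through by $x$ shows that the same recurrence holds for the reduced polynomials $\mathcal{N}_n$. Evaluating at $x=1$ gives $(n+1)\mathcal{N}_{n+1}(1)=2(2n-1)\mathcal{N}_n(1)$, hence
\begin{equation*}
\rho_n=\frac{2(2n-1)}{n+1}.
\end{equation*}

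\textbf{Step 2: the mixed recurrence.} Substituting the recurrence into $(x-1)\tilde{\mathcal{N}}_n=\mathcal{N}_{n+1}-\rho_n\mathcal{N}_n$, the coefficient of $\mathcal{N}_n$ becomes $\frac{2n-1}{n+1}(1+x)-\rho_n=\frac{2n-1}{n+1}(x-1)$. After cancelling the common factor $x-1$ we obtain
\begin{equation*}
\tilde{\mathcal{N}}_n(x)=\frac{2n-1}{n+1}\,\mathcal{N}_n(x)-\frac{n-2}{n+1}\,(x-1)\,\mathcal{N}_{n-1}(x).
\end{equation*}
The leading coefficient of $\mathcal{N}_n$ is $c_{n,n}=1$. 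Comparing leading terms, that of $\tilde{\mathcal{N}}_n$ is $\frac{2n-1}{n+1}-\frac{n-2}{n+1}=1$, so $\tilde{\mathcal{N}}_n$ is monic of degree $n-1$.

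To match \eqref{GeneralMixedTTRR2*} exactly, I multiply by $\frac{n+1}{n-2}$ (for $n\ge3$; the small cases are checked directly). This gives
\begin{itemize}
\item $A=\frac{n+1}{n-2}>0$ and $B=\frac{2n-1}{n-2}$, so $B(1)\neq0$;
\item $E=1$, $\mathcal{P}=\tilde{\mathcal{N}}_n$, $\mathcal{G}=\mathcal{N}_n$ and $\mathcal{Q}=\mathcal{N}_{n-1}$, all monic.
\end{itemize}

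\textbf{Step 3: checking the hypotheses.}
\begin{itemize}
\item The interlacing $\mathcal{N}_n\prec\mathcal{N}_{n-1}$ on $(-\infty,0)$ is the known interlacing of consecutive Narayana polynomials \cite{Liu-Wang-Unified,Kostov-Finkelshtein-Shapiro-NarayanaNumbers-JAT-2009}.
\item Suppose $\mathcal{N}_n$ and $\tilde{\mathcal{N}}_n$ had a common zero $x_0$. Since $x_0<0$, we have $x_0-1\neq0$, so the recurrence above would force $\mathcal{N}_{n-1}(x_0)=0$. This contradicts strict interlacing, so there is no common zero.
\item Reality of the zeros of $\tilde{\mathcal{N}}_n$ is formally a hypothesis of Theorem~\ref{MainTheorem2*}. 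It is not an extra obstacle, because the sign-change argument behind that theorem produces the $n-1$ real zeros directly. Evaluating the recurrence at the zeros of $\mathcal{N}_n$, and using that $x-1<0$ there, the values of $\tilde{\mathcal{N}}_n$ alternate in sign. Together with the behaviour at $-\infty$ and at $0$, this locates all $n-1$ zeros.
\end{itemize}

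\textbf{Step 4: conclusion.} Every zero of $\mathcal{N}_n$ is negative, so $E=1$ exceeds the largest zero of $\mathcal{G}$. The ``in particular'' part of Theorem~\ref{MainTheorem2*} then gives $\mathcal{P}\prec\mathcal{G}$, that is, $\tilde{\mathcal{N}}_n\prec\mathcal{N}_n$.

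The main obstacle is Step 1: pinning down the correct three-term recurrence with exactly these coefficients. The key observation is that $\rho_n$ is precisely the value that makes the $\mathcal{N}_n$-coefficient vanish at $x=1$, which is what allows division by $x-1$. If the recurrence has to be proved from scratch, I would compare coefficients of $x^k$ using $c_{n,k}=\frac1n\binom nk\binom n{k-1}$.
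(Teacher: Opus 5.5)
Your strategy is exactly the paper's. You derive a mixed relation from the three-term recurrence, read off $E=1$, and apply Theorem~\ref{MainTheorem2*} using that all zeros of $\mathcal{N}_n$ are negative. However, Step~1 as written is false, because the recurrence you quote is misindexed. Already for $n=2$ it reads $3N_3=3(1+x)N_2=3x+6x^2+3x^3$, whereas $3N_3=3x+9x^2+3x^3$. The correct relation is
\[
(n+2)N_{n+1}(x)=(2n+1)(1+x)N_n(x)-(n-1)(1-x)^2N_{n-1}(x).
\]
Hence $\rho_n=\frac{2(2n+1)}{n+2}$, which is the Catalan ratio $C_{n+1}/C_n$ since $\mathcal{N}_n(1)=C_n$. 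Your value $\frac{2(2n-1)}{n+1}$ is actually $\rho_{n-1}$. Consequently your mixed identity in Step~2 is also false. For $n=3$ it gives $x^2+\frac{15}{4}x+\frac32$, while the true polynomial is $\tilde{\mathcal{N}}_3(x)=x^2+\frac{21}{5}x+\frac{9}{5}$. The coefficient check on $c_{n,k}$ that you said you would do would have caught this.

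The repair is only an index shift. With the correct recurrence, the same computation, including your observation that $\rho_n$ kills the $\mathcal{N}_n$-coefficient at $x=1$, gives
\[
\frac{n+2}{n-1}\,\tilde{\mathcal{N}}_n(x)=\frac{2n+1}{n-1}\,\mathcal{N}_n(x)-(x-1)\,\mathcal{N}_{n-1}(x),\qquad n\ge 2 .
\]
This has $A=\frac{n+2}{n-1}>0$, $B=\frac{2n+1}{n-1}$, $B(1)\neq 0$ and $E=1$, and the division is legitimate for every $n\ge 2$. Your Steps~3--4 then go through verbatim, and the argument coincides with the paper's proof. Two of your additions are more careful than the paper, which simply asserts that the hypotheses of Theorem~\ref{MainTheorem2*} hold. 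First, you check explicitly that $\tilde{\mathcal{N}}_n$ and $\mathcal{N}_n$ have no common zero, since $x_0-1\neq 0$ would force $\mathcal{N}_{n-1}(x_0)=0$. Second, you note that reality of the zeros of $\tilde{\mathcal{N}}_n$ comes out of the sign-change count.
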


\begin{proof}
	Recall the three-term recurrence relation for standard Narayana polynomials $N_n(x)$ given in \cite[Eq.~3.3]{Liu-Wang-Unified}
	\begin{equation}\label{TTRR-Narayana}
		(n+2)N_{n+1}(x) = (2n+1)(x+1)N_n(x) - (n-1)(x-1)^2 N_{n-1}(x).
	\end{equation}
	Substituting $N_k(x) = x\mathcal{N}_k(x)$ for $k=n-1, n, n+1$, we obtain the recurrence for the reduced polynomials
	\begin{equation}
		\label{eq:reduced_std_recurrence}
		(n+2)\mathcal{N}_{n+1}(x) = (2n+1)(x+1)\mathcal{N}_n(x) - (n-1)(x-1)^2 \mathcal{N}_{n-1}(x).
	\end{equation}
	Evaluating \eqref{eq:reduced_std_recurrence} at $x=1$ yields
	\begin{equation*}
		\rho_n = \frac{\mathcal{N}_{n+1}(1)}{\mathcal{N}_n(1)} = \frac{2(2n+1)}{n+2}.
	\end{equation*}
	Substituting this value of $\rho_n$ into \eqref{CT-Narayana-Polynomial}, we have
	\begin{equation}\label{CT-Narayana-with-rho}
		(x-1)\tilde{\mathcal{N}}_{n}(x) = \mathcal{N}_{n+1}(x) - \frac{2(2n+1)}{n+2}\mathcal{N}_n(x).
	\end{equation}
	Next, we substitute the expression for $\mathcal{N}_{n+1}(x)$ from \eqref{eq:reduced_std_recurrence} into \eqref{CT-Narayana-with-rho}
	\begin{align*}
		(x-1)\tilde{\mathcal{N}}_{n}(x) &= \left[ \frac{2n+1}{n+2}(x+1)\mathcal{N}_n(x) - \frac{n-1}{n+2}(x-1)^2 \mathcal{N}_{n-1}(x) \right] - \frac{2(2n+1)}{n+2}\mathcal{N}_n(x).
	\end{align*}
	Grouping the terms containing $\mathcal{N}_n(x)$ and simplifying, we obtain
	\begin{equation}\label{MixedRR-Narayana-polynomials}
		\frac{n+2}{n-1}\tilde{\mathcal{N}}_n(x) = \frac{2n+1}{n-1}\mathcal{N}_n(x) - (x-1)\mathcal{N}_{n-1}(x).
	\end{equation}
	It is known that $\mathcal{N}_n(x)\prec \mathcal{N}_{n-1}(x)$  \cite[Corollary 7]{Kostov-Finkelshtein-Shapiro-NarayanaNumbers-JAT-2009}. Setting $\mathcal{G}_{n}(x) := \mathcal{N}_n(x)$, $\mathcal{Q}_{n-1}(x) := \mathcal{N}_{n-1}(x)$, and $\mathcal{P}_n(x) := \tilde{\mathcal{N}}_n(x)$, equation \eqref{MixedRR-Narayana-polynomials} satisfies the conditions required by Theorem \ref{MainTheorem2*}. Hence, $(x-1)\tilde{\mathcal{N}}_{n}(x)\prec \mathcal{N}_{n}(x)$ and the result follows.
\end{proof}
Using the relation given in \eqref{MixedRR-Narayana-polynomials}, we can derive a compact form for the coefficients $\tilde{c}_{n,j}$ of the polynomial $\tilde{\mathcal{N}}_n(x)$ expressed in terms of the Narayana numbers.

\begin{proposition}
	Let $\tilde{\mathcal{N}}_n(x)$ be the polynomial of degree $n-1$ defined in \eqref{CT-Narayana-Polynomial}. The coefficients $\tilde{c}_{n,j}$ satisfy the following relation
	\begin{equation}\label{coefficient-interms-Narayana}
		\tilde{c}_{n,j} = \frac{3n - 2j}{n + 2} c_{n, j+1}.
	\end{equation}
\end{proposition}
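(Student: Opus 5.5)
The plan is to read off the coefficients directly from the mixed recurrence relation \eqref{MixedRR-Narayana-polynomials} obtained in the proof of Corollary~\ref{cor:interlace_narayana}. That relation was derived from the definition \eqref{CT-Narayana-Polynomial} of $\tilde{\mathcal{N}}_n$ using only \eqref{eq:reduced_std_recurrence}. It therefore holds for the polynomials as normalised in \eqref{CT-Narayana-Polynomial}, with coefficients $\tilde c_{n,j}$, and no monic rescaling is involved. Multiplying it by $n-1$ gives
\begin{equation*}
(n+2)\tilde{\mathcal{N}}_n(x) = (2n+1)\mathcal{N}_n(x) - (n-1)(x-1)\mathcal{N}_{n-1}(x).
\end{equation*}
Since $\mathcal{N}_m(x)=\sum_{j} c_{m,j+1}x^j$, the coefficient of $x^j$ in $(x-1)\mathcal{N}_{n-1}(x)$ is $c_{n-1,j}-c_{n-1,j+1}$. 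Comparing coefficients of $x^j$ on both sides then yields
\begin{equation*}
(n+2)\tilde c_{n,j} = (2n+1)c_{n,j+1} - (n-1)\bigl(c_{n-1,j}-c_{n-1,j+1}\bigr).
\end{equation*}
Here we use the conventions $c_{n-1,0}=0$ and $c_{n-1,n}=0$, which cover the boundary indices $j=0$ and $j=n-1$.

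The next step is to express both $c_{n-1,j}$ and $c_{n-1,j+1}$ as rational multiples of $c_{n,j+1}=\frac1n\binom{n}{j+1}\binom{n}{j}$. We use the elementary binomial identities
\begin{equation*}
\binom{n-1}{j}=\tfrac{j+1}{n}\binom{n}{j+1},\quad \binom{n-1}{j-1}=\tfrac{j}{n}\binom{n}{j},\quad \binom{n-1}{j+1}=\tfrac{n-j-1}{n}\binom{n}{j+1},\quad \binom{n-1}{j}=\tfrac{n-j}{n}\binom{n}{j}.
\end{equation*}
From these we expect
\begin{equation*}
(n-1)c_{n-1,j}=\frac{j(j+1)}{n}\,c_{n,j+1},\qquad (n-1)c_{n-1,j+1}=\frac{(n-j)(n-j-1)}{n}\,c_{n,j+1}.
\end{equation*}
Both formulas remain valid at the boundary indices, where they give zero as required.

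Substituting these into the coefficient identity, the bracket becomes
\begin{equation*}
(n-1)\bigl(c_{n-1,j}-c_{n-1,j+1}\bigr)=\frac{j(j+1)-(n-j)(n-j-1)}{n}\,c_{n,j+1}=(2j+1-n)\,c_{n,j+1}.
\end{equation*}
Hence $(n+2)\tilde c_{n,j}=\bigl(2n+1-(2j+1-n)\bigr)c_{n,j+1}=(3n-2j)\,c_{n,j+1}$, which is exactly \eqref{coefficient-interms-Narayana}.

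The only real obstacle is bookkeeping. One must track the index shift between $N_n$ and $\mathcal{N}_n$ correctly (the coefficient of $x^j$ in $\mathcal{N}_m$ is $c_{m,j+1}$), and one must check the extreme indices $j=0$ and $j=n-1$, where one of the $c_{n-1,\cdot}$ terms vanishes. As a sanity check I would verify a small case such as $n=2$ directly from \eqref{CT-Narayana-Polynomial}.
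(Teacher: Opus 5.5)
Your proposal is correct and follows essentially the same route as the paper. You compare coefficients of $x^j$ in \eqref{MixedRR-Narayana-polynomials} using $c_{n-1,0}=c_{n-1,n}=0$, rewrite $c_{n-1,j}=\frac{j(j+1)}{n(n-1)}c_{n,j+1}$ and $c_{n-1,j+1}=\frac{(n-j)(n-j-1)}{n(n-1)}c_{n,j+1}$, and simplify; the only cosmetic difference is that you clear the factor $n-1$ before comparing coefficients.
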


\begin{proof}
	By comparing the coefficients of $x^j$ on both sides of \eqref{MixedRR-Narayana-polynomials} and applying the boundary conditions $c_{n-1,0}=0$ and $c_{n-1,n}=0$, we obtain the equation
	\begin{equation}\label{CompairingCoeff-eq1}
		\frac{n+2}{n-1} \tilde{c}_{n,j} = \frac{2n+1}{n-1} c_{n,j+1} - (c_{n-1, j} - c_{n-1, j+1}).
	\end{equation}
	We observe the following identities relating the coefficients $c_{n-1,k}$ and $c_{n, j+1}$
	\begin{align}
		\frac{c_{n-1,j+1}}{c_{n,j+1}} &= \frac{\frac{1}{n-1}\binom{n-1}{j+1}\binom{n-1}{j}}{\frac{1}{n}\binom{n}{j+1}\binom{n}{j}} = \frac{(n-j-1)(n-j)}{n(n-1)}  \notag \\
		\implies c_{n-1, j+1} &= \frac{(n-j)(n-j-1)}{n(n-1)} c_{n,j+1},
	\end{align}
	and similarly,
	\begin{align}
		\frac{c_{n-1,j}}{c_{n,j+1}} &= \frac{\frac{1}{n-1}\binom{n-1}{j}\binom{n-1}{j-1}}{\frac{1}{n}\binom{n}{j+1}\binom{n}{j}} = \frac{n}{n-1} \cdot \frac{j+1}{n} \cdot \frac{j}{n} \notag \\
		\implies c_{n-1, j} &= \frac{j(j+1)}{n(n-1)} c_{n,j+1}.
	\end{align}
	Substituting these expressions into the coefficient equation \eqref{CompairingCoeff-eq1}, we have
	\begin{align*}
		\frac{n+2}{n-1} \tilde{c}_{n,j} &= c_{n,j+1} \left[ \frac{2n+1}{n-1} - \frac{j(j+1)}{n(n-1)} + \frac{(n-j)(n-j-1)}{n(n-1)} \right] \notag \\
		&= \frac{c_{n,j+1}}{n(n-1)} \left[ n(2n+1) - j(j+1) + (n-j)(n-j-1) \right].
	\end{align*}
	Simplifying the  above equation  leads to the desired expression \eqref{coefficient-interms-Narayana}.
\end{proof}


\begin{corollary}Let $\mathcal{N}_n(x)$ be the reduced Narayana polynomial of degree $n-1$. Let $\mathcal{P}_n(x) = \sum_{j=0}^{n-1} d_{n,j} x^{j}$ be the polynomial defined by the coefficients
	\begin{align}\label{modified-naryana-coefficients}
		d_{n,j} = \binom{n-1}{j}^2 + \binom{n-1}{j+1}\binom{n-1}{j-1}
	\end{align}
	Then, for any odd integer $n \ge 3$, $(x+1)\mathcal{P}_n(x)\prec \mathcal{N}_n(x)$, while for every even integer $n \geq 2$, $\mathcal{P}_n(x)\prec \frac{\mathcal{N}_n(x)}{x+1}$.
\end{corollary}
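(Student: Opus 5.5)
The plan is to reduce the statement to Theorem~\ref{MainTheorem2*}, with $E=-1$, by finding a mixed recurrence relation that links $\mathcal{P}_n$, $\mathcal{N}_n$ and $\mathcal{N}_{n-1}$. A small case suggests its shape. For $n=3$ we have $\mathcal{P}_3=1+5x+x^2$, $\mathcal{N}_3=1+3x+x^2$ and $\mathcal{N}_2=1+x$. These satisfy $\mathcal{P}_3=3\mathcal{N}_3-2(x+1)\mathcal{N}_2$. I therefore conjecture that for every $n$ there are positive constants $a_n,b_n$ with
\begin{equation*}
	\mathcal{P}_n(x)=a_n\mathcal{N}_n(x)-b_n(x+1)\mathcal{N}_{n-1}(x).
\end{equation*}

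The first step is to prove this identity by comparing the coefficients of $x^j$, exactly as in the proof of the Proposition above. I would use the two ratios already computed there,
\begin{equation*}
	c_{n-1,j+1}=\frac{(n-j)(n-j-1)}{n(n-1)}c_{n,j+1},\qquad c_{n-1,j}=\frac{j(j+1)}{n(n-1)}c_{n,j+1},
\end{equation*}
together with the analogous ratios for $\binom{n-1}{j}^2$ and $\binom{n-1}{j+1}\binom{n-1}{j-1}$ relative to $c_{n,j+1}=\frac1n\binom{n}{j+1}\binom{n}{j}$. After this, both sides become $c_{n,j+1}$ times a rational function of $j$. The required identity then reduces to a polynomial identity in $j$, which determines $a_n$ and $b_n$; I expect these to be linear in $n$, with $a_3=3$ and $b_3=2$. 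This computation is the step I expect to be the main obstacle. If the simple ansatz fails, I would allow $B$ to be a linear polynomial with $B(-1)\neq0$, which Theorem~\ref{MainTheorem2*} still permits.

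For odd $n$, I would normalise the relation so that it has the form \eqref{GeneralMixedTTRR2*}. Concretely, I set $A=1/b_n>0$, $B=a_n/b_n$, $E=-1$, $\mathcal{G}=\mathcal{N}_n$ and $\mathcal{Q}=\mathcal{N}_{n-1}$, adjusting for monic normalisations and using the paper's degree-shifted indexing. The hypothesis $\mathcal{N}_n\prec\mathcal{N}_{n-1}$ is the result of \cite{Kostov-Finkelshtein-Shapiro-NarayanaNumbers-JAT-2009}, and $B(-1)\neq0$ holds trivially. It remains to check that $\mathcal{N}_n$ and $\mathcal{P}_n$ have no common zeros. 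At any zero $x_k$ of $\mathcal{N}_n$ we get $\mathcal{P}_n(x_k)=-b_n(x_k+1)\mathcal{N}_{n-1}(x_k)$. This is nonzero because the zeros of $\mathcal{N}_n$ and $\mathcal{N}_{n-1}$ strictly interlace, and because $x_k\neq-1$. The last point holds since $\mathcal{N}_n$ has even degree $n-1$ and is reciprocal by \eqref{reciprocal-identity-ReducedNarayana}, so its zeros pair as $x\leftrightarrow1/x$ and cannot include $-1$. Theorem~\ref{MainTheorem2*} then yields $(x+1)\mathcal{P}_n\prec\mathcal{N}_n$.

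For even $n$, $\mathcal{N}_n$ has odd degree and is reciprocal, so $\mathcal{N}_n(-1)=0$, and Theorem~\ref{MainTheorem2*} is not directly applicable. Instead I would run its sign-counting argument by hand. Evaluating the identity at the zeros $x_k\neq-1$ of $\mathcal{N}_n$ gives $\mathcal{P}_n(x_k)=-b_n(x_k+1)\mathcal{N}_{n-1}(x_k)$. The factor $\mathcal{N}_{n-1}(x_k)$ alternates in sign, and $(x_k+1)$ changes sign exactly once, at the middle zero $-1$. Evaluating the identity at $x=-1$ gives $\mathcal{P}_n(-1)=a_n\mathcal{N}_n(-1)=0$, so $-1$ is itself a zero of $\mathcal{P}_n$. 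Counting sign changes of $\mathcal{P}_n$ at the zeros of $\tilde{\mathcal{G}}:=\mathcal{N}_n/(x+1)$, and using the behaviour at $0$ and $-\infty$ (where $\mathcal{P}_n(0)=1>0$ and the leading coefficient is known), should place one zero of $\mathcal{P}_n$ in each gap between consecutive zeros of $\tilde{\mathcal{G}}$ and one zero on each side of the set of all its zeros. That is exactly $\mathcal{P}_n\prec\mathcal{N}_n/(x+1)$. The delicate point in this case is confirming that the gap containing $-1$ gets exactly one zero of $\mathcal{P}_n$, namely $-1$ itself, and that the two outer zeros exist. I would settle this with the total degree count $n-1$.
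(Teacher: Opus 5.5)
Your proposal is correct, and for odd $n$ it follows the paper's argument. The identity is
\[
\mathcal{P}_n(x)=n\,\mathcal{N}_n(x)-(n-1)(x+1)\,\mathcal{N}_{n-1}(x),
\]
so $a_n=n$ and $b_n=n-1$. Theorem~\ref{MainTheorem2*} is then applied with degree $n-1$ in place of $n$, $E=-1$, $A=1/(n-1)$ and $B=n/(n-1)$.

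The paper finds the identity faster than your ratio computation. It writes $nc_{n,j+1}=\binom{n}{j+1}\binom{n}{j}$, expands both factors with Pascal's rule, and reads the two cross terms as the coefficients of $(n-1)\mathcal{N}_{n-1}$ and $x(n-1)\mathcal{N}_{n-1}$. Your route also closes. Since $\binom{n-1}{j}^2=\frac{(n-j)(j+1)}{n}c_{n,j+1}$ and $\binom{n-1}{j+1}\binom{n-1}{j-1}=\frac{j(n-j-1)}{n}c_{n,j+1}$, everything reduces to a quadratic in $j$. Its $j^2$ and constant terms force $b_n=n-1$ and $a_n=n$, and the $j$ term then checks, so no fallback is needed.

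Your check that $\mathcal{N}_n$ and $\mathcal{P}_n$ share no zero for odd $n$ is more complete than the paper's, which only observes that $-1$ is a zero for even $n$. One tightening: the pairing $x\leftrightarrow 1/x$ rules out $-1$ only because the zeros of $\mathcal{N}_n$ are simple (by strict interlacing) and negative. Otherwise $-1$ could occur with even multiplicity.

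The real difference is the even case. The paper merely notes the common zero at $-1$ and asserts the interlacing with $\mathcal{N}_n/(x+1)$; your sign count actually proves it. Take $n\ge 4$ and let $x_1<\dots<x_{n-1}$ be the zeros of $\mathcal{N}_n$.
\begin{itemize}
\item The $n-4$ gaps between consecutive zeros of $\mathcal{N}_n/(x+1)$ that avoid $-1$ each contain an odd number of zeros of $\mathcal{P}_n$.
\item The gap around $-1$ contains the zero $-1$ itself.
\item Since $\mathcal{P}_n(x_1)>0$ and $\mathcal{P}_n(x)\to-\infty$ as $x\to-\infty$, there is a zero left of $x_1$.
\item Since $\mathcal{P}_n(x_{n-1})<0$ and $\mathcal{P}_n(0)=1$, there is a zero right of $x_{n-1}$.
\end{itemize}
This accounts for $n-1=\deg\mathcal{P}_n$ zeros, so each region holds exactly one. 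The case $n=2$ is trivial.
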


\begin{proof}
	We first establish the polynomial identity that relates $\mathcal{N}_n(x)$, $\mathcal{N}_{n-1}(x)$, and $\mathcal{P}_n(x)$.
	Let $c_{n,j+1} = \frac{1}{n}\binom{n}{j+1}\binom{n}{j}$ be the coefficient of $x^{j}$ in $\mathcal{N}_n(x)$. 
	
	We apply Pascal’s Identity, $\binom{n}{r} = \binom{n-1}{r} + \binom{n-1}{r-1}$, to both binomial terms in the product
	
	\begin{align}
		n c_{n,j+1} = \left[ \binom{n-1}{j+1} + \binom{n-1}{j} \right] \cdot \left[ \binom{n-1}{j} + \binom{n-1}{j-1} \right].
	\end{align}
	Expanding this product yields 
	\begin{align}
		n c_{n,j+1} = \underbrace{\left[ \binom{n-1}{j}^2 + \binom{n-1}{j+1}\binom{n-1}{j-1} \right]}_{d_{n,j}} + \left[ \binom{n-1}{j}\binom{n-1}{j-1} + \binom{n-1}{j+1}\binom{n-1}{j} \right]. 
	\end{align}

	We now reconstruct the polynomials by multiplying the above equation by $x^{j}$ and summing over $j=0$ to $n-1$
	\begin{align}\label{eq1Narayana-MTTRR}
		\sum_{j=0}^{n-1} n c_{n,j+1} x^{j} = \sum_{j=0}^{n-1} d_{n,j} x^{j} + \sum_{j=0}^{n-1} \binom{n-1}{j+1}\binom{n-1}{j} x^{j} + \sum_{j=0}^{n-1} \binom{n-1}{j}\binom{n-1}{j-1} x^{j}. 
	\end{align}
	The second term in the right-hand  side of  \eqref{eq1Narayana-MTTRR} can be written as 
	\begin{align}\label{eq1Narayana-need}
		\sum_{j=0}^{n-1} \binom{n-1}{j+1}\binom{n-1}{j} x^{j} = (n-1)\mathcal{N}_{n-1}(x).
	\end{align}
	Similarly, the third term in the right-hand side of \eqref{eq1Narayana-MTTRR} can be written as 
	\begin{align}\label{eq2Narayana-need}
		\sum_{j=0}^{n-1} \binom{n-1}{j}\binom{n-1}{j-1} x^{j}  
		&= x \sum_{j=0}^{n-2} \binom{n-1}{j+1}\binom{n-1}{j} x^{j}=x (n-1)\mathcal{N}_{n-1}(x). 
	\end{align}
	Substituting \eqref{eq1Narayana-need} and \eqref{eq2Narayana-need} in \eqref{eq1Narayana-MTTRR}, we obtain 
	\begin{align*}
		n \mathcal{N}_n(x) = \mathcal{P}_n(x) + (n-1)\mathcal{N}_{n-1}(x) + x(n-1)\mathcal{N}_{n-1}(x).
	\end{align*}
	Rearranging the term, we have 	
	\begin{align}\label{MixedRR2-Narayana}
		\frac{1}{n-1} \mathcal{P}_n(x) = \frac{n}{n-1} \mathcal{N}_n(x) - (x+1)\mathcal{N}_{n-1}(x).
	\end{align}
	It is known that $\mathcal{N}_n(x)\prec \mathcal{N}_{n-1}(x)$ \cite[Corollary 7]{Kostov-Finkelshtein-Shapiro-NarayanaNumbers-JAT-2009}. By the reciprocal property  \eqref{reciprocal-identity-ReducedNarayana} of reduced Narayana polynomials,  we see that $x=-1$ is a zero of polynomials $\mathcal{N}_n(x)$ for any even integer $n\geq2$.  Hence, for any odd integer $n\geq3$, $\mathcal{P}_n$ and $\mathcal{N}_n$ share no common zero. Consequently, the identity \eqref{MixedRR2-Narayana} satisfies the conditions of Theorem \ref{MainTheorem2*}, and the result for odd degree follows. Moreover, for every even integer $n \geq 2$, the relation \eqref{MixedRR2-Narayana} implies that $\mathcal{P}_n(x)$ and $\mathcal{N}_n(x)$ share a common zero at $x=-1$. Consequently, for every even integer $n \geq 2$, the zeros of $\mathcal{P}_n(x)$, defined in \eqref{modified-naryana-coefficients}, interlace the zeros of the quotient polynomial $\frac{\mathcal{N}_n(x)}{x+1}$.
\end{proof}

\subsection{Jacobi polynomials}\label{Examples Jacobi}
Monic Jacobi polynomials \( P_n^{(\alpha, \beta)}(x) \) form an orthogonal sequence on the interval \( (-1, 1) \) with respect to the measure $d\mu(x) = (1 - x)^\alpha (1 + x)^\beta \, dx,$ where \( \alpha > -1 \) and \( \beta > -1 \) \cite{Chiharabook}. These polynomials satisfy the following three-term recurrence relation 
\begin{align} \label{TTRR-Monic-Jacobi}
	P_{n+1}^{(\alpha, \beta)}(x) &= (x - c^{(\alpha, \beta)}_{n+1}) P_n^{(\alpha, \beta)}(x) - \lambda^{(\alpha, \beta)}_{n+1}  P_{n-1}^{(\alpha, \beta)}(x),
\end{align} with the initial conditions $	P_0^{(\alpha, \beta)}(x) = 1, \quad P_{-1}^{(\alpha, \beta)}(x) = 0.$ The recurrence coefficients \( c_{n+1} \) and \( \lambda_{n+1} \) are given by
\begin{align}\label{Recurrence-Coeffcients-Monic-Jacobi}
	\nonumber	\lambda^{(\alpha, \beta)}_{n+1} &= \frac{4n(n + \alpha)(n + \beta)(n + \alpha + \beta)}{(2n + \alpha + \beta)^2 (2n + \alpha + \beta + 1)(2n + \alpha + \beta - 1)},\\
	c^{(\alpha, \beta)}_{n+1} &= \frac{\beta^2 - \alpha^2}{(2n + \alpha + \beta)(2n + \alpha + \beta + 2)}. 
\end{align}
\begin{corollary}\cite[Theorem 1]{Arvesu-Driver-Littlejohn-RamanujanJ-2023}.\label{interlaceJacobi-n_ab-n+1_ab+1}
	Let $P^{(\alpha, \beta)}_n(x)$ be a monic Jacobi  polynomial of degree $n$ and denote the zeros of the polynomial $P^{(\alpha, \beta+1)}_{n+1}(x)$, in ascending order, by $\{x_{k,n+1}\}_{k=1}^{n+1}$. Then, $x_{n+1,n+1}>E_{n,\alpha,\beta}:= -1 + \frac{2(n+1)(n+\alpha+1)}{(2n+\alpha+\beta+2)(2n+\alpha+\beta+3)}$ and $(x-E_{n,\alpha,\beta})P^{(\alpha, \beta)}_n(x)\prec P^{(\alpha, \beta+1)}_{n+1}(x)$ for any $\alpha>-1, \beta >0$ and $n\in \mathbb{N}$. Moreover,   $P^{(\alpha,\beta+1)}_{n+1}(x)\prec P^{(\alpha, \beta)}_n(x)$ for suitable choice of $n\in \mathbb{N}, \alpha>-1, \beta>0$ if and only if the smallest zero of $P^{(\alpha,\beta+1)}_{n+1}(x)$ is greater than the point $E_{n,\alpha,\beta}$ (i.e. $x_{1,n+1}>E_{n,\alpha,\beta}$).
\end{corollary}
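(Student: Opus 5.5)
The plan is to apply Theorem~\ref{MainTheorem1}(1) with
\begin{align*}
\mathcal{G}_{n+1}:=P^{(\alpha,\beta+1)}_{n+1},\qquad \mathcal{Q}_{n+1}:=P^{(\alpha,\beta)}_{n+1},\qquad \mathcal{P}_n:=P^{(\alpha,\beta)}_n .
\end{align*}
As in the Krawtchouk and Meixner examples, the first step is to derive a mixed recurrence of type \eqref{GeneralMixedTTRR}. The weights satisfy $(1-x)^\alpha(1+x)^{\beta+1}=(1+x)\,(1-x)^\alpha(1+x)^\beta$. The Christoffel transformation \cite[page 35]{Chiharabook} at the point $a=-1$, which lies outside $(-1,1)$, then gives
\begin{align*}
(x+1)P^{(\alpha,\beta+1)}_{n+1}(x)=P^{(\alpha,\beta)}_{n+2}(x)-r_n P^{(\alpha,\beta)}_{n+1}(x),\qquad r_n=\frac{P^{(\alpha,\beta)}_{n+2}(-1)}{P^{(\alpha,\beta)}_{n+1}(-1)} .
\end{align*}
Next I eliminate $P^{(\alpha,\beta)}_{n+2}$ using \eqref{TTRR-Monic-Jacobi} with $n$ replaced by $n+1$. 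This yields
\begin{align*}
\lambda^{(\alpha,\beta)}_{n+2}P^{(\alpha,\beta)}_n(x)=-(x+1)P^{(\alpha,\beta+1)}_{n+1}(x)+\bigl(x-c^{(\alpha,\beta)}_{n+2}-r_n\bigr)P^{(\alpha,\beta)}_{n+1}(x).
\end{align*}
Thus $A(x):=\lambda^{(\alpha,\beta)}_{n+2}$, which is a positive constant for $\alpha,\beta>-1$ by \eqref{Recurrence-Coeffcients-Monic-Jacobi}. We also have $B(x):=-(x+1)$ and $E:=c^{(\alpha,\beta)}_{n+2}+r_n$.

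The main computational step, and the one I expect to be the obstacle, is to identify $E$ with $E_{n,\alpha,\beta}$. For this I use the standard endpoint value of the monic Jacobi polynomial:
\begin{align*}
P^{(\alpha,\beta)}_k(-1)=(-1)^k\frac{2^k(\beta+1)_k}{(k+\alpha+\beta+1)_k}.
\end{align*}
It gives
\begin{align*}
r_n=-\frac{2(n+\beta+2)(n+\alpha+\beta+2)}{(2n+\alpha+\beta+3)(2n+\alpha+\beta+4)} .
\end{align*}
Adding $c^{(\alpha,\beta)}_{n+2}=\frac{\beta^2-\alpha^2}{(2n+\alpha+\beta+2)(2n+\alpha+\beta+4)}$ and simplifying over a common denominator should collapse the result to
\begin{align*}
-1+\frac{2(n+1)(n+\alpha+1)}{(2n+\alpha+\beta+2)(2n+\alpha+\beta+3)}.
\end{align*}
I will verify this by checking the numerator identity. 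A useful sanity check is the factor $(2n+\alpha+\beta+4)$, which must cancel. From this form it is clear that $E_{n,\alpha,\beta}>-1$, so $B(E)=-(E+1)\neq0$.

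The remaining hypotheses of Theorem~\ref{MainTheorem1} are checked as follows.

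\emph{Common zeros.} $\mathcal{G}_{n+1}$ and $\mathcal{P}_n$ have no common zero. If $\mathcal{G}_{n+1}(y)=\mathcal{P}_n(y)=0$, then the relation forces $(y-E)\mathcal{Q}_{n+1}(y)=0$. Since $\mathcal{Q}_{n+1}$ and $\mathcal{G}_{n+1}$ strictly interlace, they share no zero, so we would need $y=E$. But then $B(E)\mathcal{G}_{n+1}(E)=0$ is consistent, so this case needs a separate argument: either show that $E$ is not a zero of $\mathcal{G}_{n+1}$, or argue that $\mathcal{P}_n(E)\neq0$ via the interlacing $\mathcal{Q}_{n+1}\prec\mathcal{P}_n$ together with a sign evaluation. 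I would handle this last point with care.

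\emph{Interlacing of $\mathcal{Q}_{n+1}$ and $\mathcal{G}_{n+1}$.} The required hypothesis is $\mathcal{Q}_{n+1}\prec\mathcal{G}_{n+1}$, that is, $P^{(\alpha,\beta)}_{n+1}\prec P^{(\alpha,\beta+1)}_{n+1}$. This follows from Markov's monotonicity theorem, since the zeros of Jacobi polynomials increase with $\beta$, as in \cite{Jordaan-Tookos-ANM-2009}.

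With all hypotheses in place, Theorem~\ref{MainTheorem1}(1) gives $E_{n,\alpha,\beta}<x_{n+1,n+1}$ and $(x-E_{n,\alpha,\beta})P_n^{(\alpha,\beta)}\prec P^{(\alpha,\beta+1)}_{n+1}$. It also gives that $P^{(\alpha,\beta+1)}_{n+1}\prec P^{(\alpha,\beta)}_n$ holds if and only if $E_{n,\alpha,\beta}<x_{1,n+1}$.
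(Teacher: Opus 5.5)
Your route differs from the paper's, and its computational core is right. You derive $\lambda^{(\alpha,\beta)}_{n+2}P^{(\alpha,\beta)}_n=-(x+1)P^{(\alpha,\beta+1)}_{n+1}+(x-E)P^{(\alpha,\beta)}_{n+1}$ from the Christoffel formula at $-1$ and take $\mathcal{Q}_{n+1}=P^{(\alpha,\beta)}_{n+1}$. The paper instead quotes a relation of Arves\'u--Driver--Littlejohn with $\mathcal{Q}_{n+1}=P^{(\alpha,\beta-1)}_{n+1}$ and a linear $A(x)$, and relies on the shift-by-two interlacing of Driver--Jordaan--Mbuyi; this is where its restriction $\beta>0$ comes from.

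Your value of $E$ is correct. With $N=2n+\alpha+\beta$, the numerator of $c^{(\alpha,\beta)}_{n+2}+r_n+1$ over $(N+2)(N+3)(N+4)$ is exactly $2(n+1)(n+\alpha+1)(N+4)$. Your version has three advantages:
\begin{itemize}
\item $A>0$ is a constant;
\item it works for all $\beta>-1$;
\item it is self-contained. The relation printed in the paper, read with monic polynomials, does not even balance in degree $n+2$.
\end{itemize}

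One justification must be fixed. Write $y_k$, $x_k$ for the zeros of $P^{(\alpha,\beta)}_{n+1}$ and $P^{(\alpha,\beta+1)}_{n+1}$. Markov's monotonicity yields only $y_k<x_k$, not $x_k<y_{k+1}$. Your own Christoffel identity closes this. At $y_k$ it gives $(y_k+1)P^{(\alpha,\beta+1)}_{n+1}(y_k)=P^{(\alpha,\beta)}_{n+2}(y_k)$, which alternates in sign and is negative at $y_{n+1}$. Hence $y_1<x_1<y_2<\cdots<y_{n+1}<x_{n+1}$.

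The genuine gap is the common-zero hypothesis you left open, and it cannot be closed in the generality stated. Putting $x=E$ in your relation gives $\lambda^{(\alpha,\beta)}_{n+2}P^{(\alpha,\beta)}_n(E)=-(E+1)P^{(\alpha,\beta+1)}_{n+1}(E)$. So your two options are one and the same condition, and it fails for admissible parameters. For $n=1$, $\alpha=31/2$, $\beta=1/2$,
\[
E_{1,\alpha,\beta}=-\tfrac56,\qquad P^{(\alpha,\beta)}_1(x)=x+\tfrac56,\qquad P^{(\alpha,\beta+1)}_2(x)=\bigl(x+\tfrac56\bigr)\bigl(x+\tfrac12\bigr).
\]
Here $E=x_{1,2}$ is a common zero, so $(x-E)P^{(\alpha,\beta)}_1$ has a double zero. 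The strict interlacing claimed for all $\alpha>-1$, $\beta>0$ is therefore false at this point. Such coincidences occur whenever $E$ hits a zero of $P^{(\alpha,\beta+1)}_{n+1}$ as the parameters vary. One example is the boundary between the two regimes of the if-and-only-if clause, as here.

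The paper's proof applies Theorem~\ref{MainTheorem1} without checking this hypothesis either, so the defect lies in the statement rather than in your route. What can be proved is the corollary under the extra assumption $P^{(\alpha,\beta+1)}_{n+1}(E_{n,\alpha,\beta})\neq0$, equivalently $P^{(\alpha,\beta)}_n(E_{n,\alpha,\beta})\neq0$. With that assumption your argument is complete.
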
 
\begin{proof} This result was proved in \cite[Theorem 1]{Arvesu-Driver-Littlejohn-RamanujanJ-2023} and can be shown to be a straightforward consequence of Theorem \ref{MainTheorem1}:
	Let  $\{y_{k,n+1}\}_{k=1}^{n+1}$ denote the zeros of the polynomial $P^{(\alpha, \beta-1)}_{n+1}(x)$. Using (cf. \cite[eq.~(4)]{Arvesu-Driver-Littlejohn-RamanujanJ-2023}), the monic Jacobi polynomial $P^{(\alpha,\beta)}_n(x)$ satisfies the following mixed three-term recurrence relation
	\begin{align}
		\nonumber	&M_{n,\alpha,\beta}\left(x+1+\frac{2 \beta}{2n+\alpha+\beta+3}\right)P^{(\alpha, \beta)}_{n}(x)=-(n+\alpha+\beta+1)_2(x+1)P^{(\alpha, \beta+1)}_{n+1}(x)\\&+\left(x+1 - \frac{2(n+1)(n+\alpha+1)}{(2n+\alpha+\beta+2)(2n+\alpha+\beta+3)}\right)P^{(\alpha, \beta-1)}_{n+1}(x),
	\end{align}
	where $M_{n,\alpha,\beta}=\frac{2(n+1)(n+\alpha+\beta)_2}{(2n+\alpha+\beta+1)_2}$ for $n\in \mathbb{N}, \alpha>-1, \beta>0$. Denoting $\mathcal{G}_{n+1}(x):=P^{(\alpha, \beta+1)}_{n+1}(x)$, $\mathcal{Q}_{n+1}(x):=P^{(\alpha, \beta-1)}_{n+1}(x)$, and $\mathcal{P}_n(x):=P^{(\alpha, \beta)}_{n}(x)$, the zeros of $\mathcal{G}_{n+1}(x)$ and $\mathcal{Q}_{n+1}(x)$ satisfy the interlacing property \eqref{Interlace1withsamedegree} using (cf. \cite[Theorem 2.4]{Driver-Jordaan-Mbuyi-NumericalAlg-2008}) for $\alpha>-1,\beta>0$. Hence, the result directly follows by applying Theorem \ref{MainTheorem1}(1).\end{proof}
	

\begin{corollary}\label{interlaceJacobi-n_ab-n+1_a+1b+1*}
	Let $P^{(\alpha, \beta)}_n(x)$ be a monic Jacobi  polynomial of degree $n$. Let $\{x_{k,n}\}_{k=1}^{n}$ denote the zeros of the polynomial $P^{(\alpha+1, \beta+1)}_{n}(x)$
    and define  \begin{align}
		E_{n,\alpha,\beta}:=  \frac{\alpha-\beta}{2n+\alpha+\beta+2},
	\end{align} for any $\alpha>-1, \beta >-1$ and $n\in \mathbb{N}$. Then $(x-E_{n,\alpha,\beta})P^{(\alpha, \beta)}_n(x)\prec P^{(\alpha+1, \beta+1)}_{n}(x)$. In particular, $P^{(\alpha+1,\beta+1)}_{n}(x)\prec P^{(\alpha, \beta)}_n(x)$, whenever $E_{n,\alpha,\beta}<x_{1,n}$ while  $P^{(\alpha, \beta)}_n(x) \prec P^{(\alpha+1,\beta+1)}_{n}(x)$, whenever $E_{n,\alpha,\beta}>x_{n,n}$ for a suitable choice of $n\in \mathbb{N}, \alpha>-1, \beta>-1$.
\end{corollary}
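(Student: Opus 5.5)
The plan is to apply Theorem~\ref{MainTheorem2*} with
\[
\mathcal{G}_n(x):=P^{(\alpha+1,\beta+1)}_n(x),\qquad \mathcal{Q}_{n-1}(x):=P^{(\alpha+1,\beta+1)}_{n-1}(x),\qquad \mathcal{P}_n(x):=P^{(\alpha,\beta)}_n(x),
\]
all having real zeros in $(a,b)=(-1,1)$. The hypothesis $\mathcal{G}_n\prec\mathcal{Q}_{n-1}$ is the classical interlacing of consecutive orthogonal polynomials for the weight $(1-x)^{\alpha+1}(1+x)^{\beta+1}$. What remains is to produce a mixed recurrence of the form \eqref{GeneralMixedTTRR2*} with $A>0$, $B(E)\neq 0$ and $E=E_{n,\alpha,\beta}$, and to rule out common zeros.

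\textbf{Step 1: the quasi-orthogonal expansion.} Since $(1-x)^{\alpha+1}(1+x)^{\beta+1}=(1-x^2)(1-x)^{\alpha}(1+x)^{\beta}$, the polynomial $P^{(\alpha,\beta)}_n$ is orthogonal to all polynomials of degree $\le n-3$ with respect to the $(\alpha+1,\beta+1)$ weight. It is therefore quasi-orthogonal of order two, and
\[
P^{(\alpha,\beta)}_n(x)=P^{(\alpha+1,\beta+1)}_n(x)+a_n P^{(\alpha+1,\beta+1)}_{n-1}(x)+b_n P^{(\alpha+1,\beta+1)}_{n-2}(x).
\]
I would compute $a_n,b_n$ explicitly, for instance by comparing the two leading subleading coefficients using the explicit monic Jacobi coefficients. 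They can also be read off from the standard connection formula between $P^{(\alpha,\beta)}_n$ and $P^{(\alpha+1,\beta+1)}_k$. The Gegenbauer case $C^\lambda_n=C^{\lambda+1}_n-C^{\lambda+1}_{n-2}$ indicates that $b_n<0$.

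\textbf{Step 2: eliminate $P^{(\alpha+1,\beta+1)}_{n-2}$.} Use \eqref{TTRR-Monic-Jacobi} with parameters $(\alpha+1,\beta+1)$ at index $n$:
\[
\lambda_n P_{n-2}=(x-c_n)P_{n-1}-P_n .
\]
Substituting, and multiplying by $A:=-\lambda_n/b_n>0$, gives
\[
A\,P^{(\alpha,\beta)}_n=(A+1)P^{(\alpha+1,\beta+1)}_n-\bigl(x-c_n-a_n\lambda_n/b_n\bigr)P^{(\alpha+1,\beta+1)}_{n-1}.
\]
Thus $B=A+1$ is a nonzero constant, so $B(E)\neq0$ holds automatically, and
\[
E=c^{(\alpha+1,\beta+1)}_n+a_n\lambda^{(\alpha+1,\beta+1)}_n/|b_n|.
\]
A routine simplification should show $E=(\alpha-\beta)/(2n+\alpha+\beta+2)$. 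The consistency check $\alpha=\beta\Rightarrow E=0$ matches the symmetric case.

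\textbf{Step 3: no common zeros.} This is the step I expect to be the main obstacle.

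1. Suppose $\xi$ is a common zero of $\mathcal{G}_n$ and $\mathcal{P}_n$. The relation forces $(\xi-E)\mathcal{Q}_{n-1}(\xi)=0$.
2. Because $\mathcal{G}_n\prec\mathcal{Q}_{n-1}$, the polynomials $\mathcal{G}_n$ and $\mathcal{Q}_{n-1}$ share no zero, so only $\xi=E$ is possible.
3. To exclude $\xi=E$, I would first try a second independent relation between the same pair. One candidate is the derivative identity $P^{(\alpha+1,\beta+1)}_n\propto \frac{d}{dx}P^{(\alpha,\beta)}_{n+1}$ combined with the differential-recurrence $(1-x^2)\frac{d}{dx}P^{(\alpha,\beta)}_n=$ a combination of $P^{(\alpha,\beta)}_{n+1},P^{(\alpha,\beta)}_n,P^{(\alpha,\beta)}_{n-1}$.
4. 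The aim is to show that a common zero of $P^{(\alpha,\beta)}_n$ and $P^{(\alpha+1,\beta+1)}_n$ would force a common zero of consecutive $P^{(\alpha,\beta)}$'s, which is impossible.
5. If that fails, the fallback is to read the corollary's ``suitable choice of parameters'' as excluding the nongeneric case $\mathcal{G}_n(E)=0$.

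\textbf{Conclusion.} With all hypotheses verified, Theorem~\ref{MainTheorem2*} gives $(x-E)\mathcal{P}_n\prec\mathcal{G}_n$. Its two particular cases ($E>x_{n,n}$ and $E<x_{1,n}$) give exactly the two stated full-interlacing conclusions.
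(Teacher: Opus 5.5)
\textbf{Steps 1--2.} These reconstruct exactly the relation the paper imports from Driver--Jordaan (their eq.~(8)): $A=\frac{n+\alpha+\beta+1}{n}$ and $B=\frac{2n+\alpha+\beta+1}{n}=A+1$, so $b_n=-n\lambda^{(\alpha+1,\beta+1)}_n/(n+\alpha+\beta+1)$. From there your route is the paper's: apply Theorem~\ref{MainTheorem2*} to the same triple. Three small repairs are needed.
\begin{enumerate}
\item The sign of $b_n$ needs no Gegenbauer heuristic. Test the expansion against $P^{(\alpha+1,\beta+1)}_{n-2}$ in the $(\alpha+1,\beta+1)$ inner product and use $(1-x^2)P^{(\alpha+1,\beta+1)}_{n-2}=-x^n+\cdots$. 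This gives $b_n=-h^{(\alpha,\beta)}_n/h^{(\alpha+1,\beta+1)}_{n-2}<0$, where $h$ denotes squared norms.
\item Your displayed linear factor should read $x-c_n+a_n\lambda_n/b_n$. Your later formula with $|b_n|$ is the correct one.
\item For $n=1$ there is no $b_1$, so that case must be checked directly.
\end{enumerate}

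\textbf{Step 3: a genuine gap.} It cannot be closed as you plan, because the no-common-zero claim is false in general. For $\alpha=\beta$ one has $E_{n,\alpha,\alpha}=0$, and for odd $n$ both $P^{(\alpha,\alpha)}_n$ and $P^{(\alpha+1,\alpha+1)}_n$ are odd polynomials, so both vanish at $0=E$. Then $(x-E)P^{(\alpha,\beta)}_n$ has a double zero at a zero of $P^{(\alpha+1,\beta+1)}_n$, and the asserted interlacing fails. For $n=1$, $\alpha=\beta=0$ you are comparing $x^2$ with $x$; for $n=3$ (Legendre) you are comparing $x(x^3-\frac{3}{5}x)$ with $x^3-\frac{3}{7}x$.

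So items 3--4 of your plan must fail. Concretely, combine $\frac{d}{dx}P^{(\alpha,\beta)}_{n+1}=(n+1)P^{(\alpha+1,\beta+1)}_n$ with the structure relation and the three-term recurrence. A common zero $\xi$ then only has to satisfy a linear equation in $\xi$ multiplied by $P^{(\alpha,\beta)}_{n+1}(\xi)\neq 0$. That pins $\xi$ to a single point rather than producing a contradiction. The paper's proof has the same gap: it simply asserts that $P^{(\alpha,\beta)}_n$ and $P^{(\alpha+1,\beta+1)}_n$ share no zeros.

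\textbf{The repair.} Your own reduction gives it. A common zero can only be $E$, and by the relation $P^{(\alpha+1,\beta+1)}_n(E)=0$ if and only if $P^{(\alpha,\beta)}_n(E)=0$. Hence the main interlacing claim holds exactly under the extra hypothesis $P^{(\alpha+1,\beta+1)}_n(E_{n,\alpha,\beta})\neq 0$. State this as an explicit hypothesis rather than reading it into ``suitable choice''. In the statement that phrase qualifies only the ``in particular'' clause, while $(x-E_{n,\alpha,\beta})P^{(\alpha,\beta)}_n\prec P^{(\alpha+1,\beta+1)}_n$ is asserted for all $\alpha,\beta>-1$ and $n\in\mathbb{N}$.

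The two full-interlacing conclusions are unaffected. When $E<x_{1,n}$ or $E>x_{n,n}$, $E$ is automatically not a zero of $P^{(\alpha+1,\beta+1)}_n$, so no common zero can occur.
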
 

\begin{proof}
	It is known \cite[eq.~(8)]{Driver-Jordaan-NumerAlg-2018} that the monic Jacobi polynomial satisfies
	\begin{align}\label{Mixed-TTRR-Monic-Jacobi-DJ-2018-eq8*}
		\frac{n+\alpha+\beta+1}{n}\,P^{(\alpha,\beta)}_n(x)
		&= \frac{2n+\alpha+\beta+1}{n}\,P^{(\alpha+1,\beta+1)}_n(x)
		 - \left(x - \frac{\alpha-\beta}{2n+\alpha+\beta+2}\right)P^{(\alpha+1,\beta+1)}_{n-1}(x).
	\end{align}
	Setting
	\begin{align*}
		\mathcal{G}_n(x) &:= P^{(\alpha+1,\beta+1)}_n(x), &
		\mathcal{Q}_{n-1}(x) &:= P^{(\alpha+1,\beta+1)}_{n-1}(x), &
		\mathcal{P}_n(x) &:= P^{(\alpha,\beta)}_n(x)
	\end{align*}
	and
	\begin{align*}
		E := \frac{\alpha-\beta}{2n+\alpha+\beta+2},
	\end{align*}
	the relation rewrites as
	\begin{align*}
		A(x)\,\mathcal{P}_n(x) = B(x)\,\mathcal{G}_n(x) - (x-E)\,\mathcal{Q}_{n-1}(x)
	\end{align*}
	with positive constants
	\begin{align*}
		A(x) &\equiv \frac{n+\alpha+\beta+1}{n}>0, &
		B(x) &\equiv \frac{2n+\alpha+\beta+1}{n}>0.
	\end{align*}
	Since $P^{(\alpha+1,\beta+1)}_n(x)\prec P^{(\alpha+1,\beta+1)}_{n-1}(x)$, $B(E)>0\neq0$, and $P^{(\alpha, \beta)}_n(x)$   and $P^{(\alpha+1, \beta+1)}_n(x)$  share no common zeros, all assumptions of Theorem~\ref{MainTheorem2*} are satisfied. The asserted interlacing properties follow immediately.
\end{proof}

\begin{remark}\label{rmk1:interlaceJacobi-n_ab-n_a+1b+1}  
	\begin{itemize}
		\item[]
		\item[(i)] A weaker version of Corollary~\ref{interlaceJacobi-n_ab-n+1_a+1b+1*} was previously obtained in \cite[Theorem~3]{Arvesu-Driver-Littlejohn-RamanujanJ-2023}.
		\item[(ii)] Corollary \ref{interlaceJacobi-n_ab-n+1_a+1b+1*} improves \cite[Theorem 3]{Arvesu-Driver-Littlejohn-RamanujanJ-2023} by identifying $E_{n,\alpha,\beta}:=\frac{\alpha-\beta}{2n+\alpha+\beta+2}$ as an additional interlacing point that completes the interlacing of the zeros of $P^{(\alpha, \beta)}_n(x)$ and $P^{(\alpha+1, \beta+1)}_{n}(x)$. Furthermore, we show in Corollary \ref{interlaceJacobi-n_ab-n+1_a+1b+1*} that there is full interlacing between the zeros of $\mathcal{P}_n^{(\alpha,\beta)}$ and $\mathcal{P}_n^{(\alpha+1,\beta+1)}$ if and only if $E_{n,\alpha,\beta}<x_{1,n}$ or $E_{n,\alpha,\beta}>x_{n,n}$.
		\item[(iii)]Theorem~\ref{MainTheorem2*} establishes a more general interlacing criterion for polynomials satisfying a mixed three-term recurrence of the form \eqref{GeneralMixedTTRR2*} and Corollary~\ref{interlaceJacobi-n_ab-n+1_a+1b+1*} is recovered as an immediate special case of Theorem~\ref{MainTheorem2*}.
		
	\end{itemize}
	
\end{remark}

\begin{remark}\label{rmk2:interlaceJacobi-n_ab-n+1_a+1b+1}
	Part of the conclusion of \cite[Theorem~3]{Arvesu-Driver-Littlejohn-RamanujanJ-2023} reads as follows 
	``The remaining two (simple) zeros of $P_{n}^{(\alpha+1,\beta+1)}$ either both lie in one of the intervals with endpoints at a pair of consecutive zeros of $P_{n}^{(\alpha,\beta)}$ or one zero of $P_{n}^{(\alpha+1,\beta+1)}$ lies in the interval $(-1,x_{1,n})$ and one zero of $P_{n}^{(\alpha+1,\beta+1)}$ lies in the interval $(x_{n,n}, 1)$.''  
	It follows from Theorem~\ref{MainTheorem2}  as well as from numerical experiments in Table \ref{tab:zeros_comparison}, that either one zero of $P_{n}^{(\alpha+1,\beta+1)}$ lies in the interval $(-1,x_{1,n})$ and no zero lies in $(x_{n,n}, 1)$, or vice versa, depending on the location of $E_{n,\alpha,\beta}$.  
	This observation indicates that the part of the conclusion of \cite[Theorem~3]{Arvesu-Driver-Littlejohn-RamanujanJ-2023}, which states that one zero of $P_{n}^{(\alpha+1,\beta+1)}$ lies in $(-1,x_{1,n})$ and one zero lies in the interval $(x_{n,n}, 1)$, does not hold. However, we remind the reader that the interlacing follows from Theorem \ref{MainTheorem2*} and Corollary \ref{interlaceJacobi-n_ab-n+1_a+1b+1*}.
\end{remark}

\begin{table}[ht]
	\centering
	\caption{Comparison of the zeros of $P_{n}^{(\alpha,\beta)}$ (denoted by $x_{k,n}$) and $P_{n}^{(\alpha+1,\beta+1)}$ (denoted by $z_{k,n}$). }
	\label{tab:zeros_comparison}
	\footnotesize 
	\setlength{\tabcolsep}{12pt} 
	\begin{tabular}{@{}c cc | cc@{}}
		\toprule
		
		\addlinespace[2pt]
		& \multicolumn{2}{c|}{$n=6, \alpha=2, \beta=14$, $E_{6,\alpha,\beta} = -0.4$} & \multicolumn{2}{c}{$n=7, \alpha=14, \beta=2$, $E_{7,\alpha,\beta} = 0.375$} \\
		\midrule
		$k$ & $x_{k,6}$ & $z_{k,6}$ & $x_{k,7}$ & $z_{k,7}$ \\
		\midrule
		1 & $-0.203565$ & $-0.212298$ & $-0.931498$ & $-0.906419$ \\
		2 & $0.101387$ & $0.0784816$ & $-0.818611$ & $-0.784335$ \\
		3 & $0.369625$ & $0.335892$ & $-0.661375$ & $-0.624494$ \\
		4 & $0.59992$ & $0.560588$ & $-0.465388$ & $-0.431566$ \\
		5 & $0.785274$  & $0.747193$  & $-0.237196$  & $-0.210968$ \\
		6 & $0.918787$  & $ 0.890144$  & $0.017114$  & $0.032615$ \\
		7 & $-$  & $-$  & $0.296953$  & $0.300166$ \\
		\bottomrule
	\end{tabular}
	
	\vspace{0.2cm}
	\begin{minipage}{0.9\textwidth}
		\footnotesize
		\textbf{Note:} Observe that $z_{1,6} < x_{1,6}$ (a zero exists in $(-1, x_{1,6})$), but $z_{6,6} < x_{6,6}$ (no zero in $(x_{6,6}, 1)$). 
		However,  $z_{1,7} > x_{1,7}$ (no zero in $(-1, x_{1,7})$), but $z_{7,7} > x_{7,7}$ (a zero exists in $(x_{7,7}, 1)$).
	\end{minipage}
\end{table}	

In this subsection, numerical computations of the zeros were done using $\text{Mathematica}^{\text{\textregistered}}$ software. 
\subsection{Laguerre polynomials}\label{Examples Laguerre}
Monic Laguerre polynomials $\{L_n^{(\alpha)}(x)\}_{n=0}^{\infty}$ form an orthogonal sequence on the interval $(0, \infty)$ with respect to the measure $d\mu(x) = x^\alpha e^{-x} \, dx,$ where $\alpha > -1$ \cite{Chiharabook}. This sequence satisfies a three-term recurrence relation
\begin{equation} \label{TTRR-Monic-Laguerre}
	L_{n+1}^{(\alpha)}(x) = (x - c_{n+1}^{(\alpha)}) L_n^{(\alpha)}(x) - \lambda_{n+1}^{(\alpha)} L_{n-1}^{(\alpha)}(x),
\end{equation}
with initial conditions $L_0^{(\alpha)}(x) = 1$ and $L_{-1}^{(\alpha)}(x) = 0$. The recurrence coefficients $c_{n+1}^{(\alpha)}$ and $\lambda_{n+1}^{(\alpha)}$ are explicitly given by $	c_{n+1}^{(\alpha)} = 2n + \alpha + 1$  and $ \lambda_{n+1}^{(\alpha)} = n(n + \alpha)$.

\begin{corollary}\cite[Theorem 2.1]{Arvesu-Driver-Littlejohn-ITSF-2021}\label{interlaceLaguerre-n_a-n+1_a+1}
	Let $L^{(\alpha)}_n(x)$ be a monic Laguerre polynomial of degree $n$ with $\a>-1$. Let $\{z_{k,n}\}_{k=1}^{n}$, and  $\{x_{k,n+1}\}_{k=1}^{n+1}$ denote the zeros of the polynomials $L^{(\alpha)}_n(x)$ and $L^{(\alpha+1)}_{n+1}(x)$, respectively, in ascending order.  Then, $x_{n+1,n+1}>n+1$ and $(x-n-1)L^{(\alpha)}_n(x)\prec L^{(\alpha+1)}_{n+1}(x)$ for any $\alpha>-1$ and $n\in \mathbb{N}$. Moreover, $L^{(\alpha+1)}_{n+1}(x)\prec L^{(\alpha)}_n(x)$  for suitable choice of $n\in \mathbb{N}, \alpha>-1$ if and only if the smallest zero of $L^{(\alpha+1)}_{n+1}(x)$ is greater than the point $n+1$ (i.e. $x_{1,n+1}>n+1$).
\end{corollary}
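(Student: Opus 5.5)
The plan is to follow the same pattern as the Krawtchouk and Meixner examples. I would derive a mixed recurrence of type \eqref{GeneralMixedTTRR} with $\mathcal{G}_{n+1}(x):=L^{(\alpha+1)}_{n+1}(x)$, $\mathcal{Q}_{n+1}(x):=L^{(\alpha)}_{n+1}(x)$, $\mathcal{P}_n(x):=L^{(\alpha)}_n(x)$ and $E=n+1$, and then apply Theorem~\ref{MainTheorem1}(1).

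To get the relation I would combine two standard monic Laguerre identities. The first is the parameter-shift identity $L^{(\alpha)}_{m}(x)=L^{(\alpha+1)}_{m}(x)+m\,L^{(\alpha+1)}_{m-1}(x)$. The second is the Christoffel relation at the origin, $x\,L^{(\alpha+1)}_{m}(x)=L^{(\alpha)}_{m+1}(x)+(m+\alpha+1)L^{(\alpha)}_{m}(x)$, which comes from $x^{\alpha+1}e^{-x}=x\cdot x^{\alpha}e^{-x}$. Take the first identity with $m=n+1$ and multiply it by $x$. Then replace $x\,L^{(\alpha+1)}_{n}(x)$ using the second identity with $m=n$. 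This gives
\begin{align*}
x L^{(\alpha)}_{n+1}(x)=x L^{(\alpha+1)}_{n+1}(x)+(n+1)\bigl[L^{(\alpha)}_{n+1}(x)+(n+\alpha+1)L^{(\alpha)}_{n}(x)\bigr].
\end{align*}
Rearranging yields
\begin{align*}
(n+1)(n+\alpha+1)L^{(\alpha)}_n(x)=-x\,L^{(\alpha+1)}_{n+1}(x)+(x-n-1)L^{(\alpha)}_{n+1}(x).
\end{align*}
This is \eqref{GeneralMixedTTRR} with $A(x)=(n+1)(n+\alpha+1)>0$ for $\alpha>-1$, and with $B(x)=-x$, so that $B(n+1)=-(n+1)\neq0$. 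I would check the coefficients by comparing the $x^{n+1}$ and $x^n$ terms.

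Next I need $\mathcal{Q}_{n+1}\prec\mathcal{G}_{n+1}$, that is, $L^{(\alpha)}_{n+1}\prec L^{(\alpha+1)}_{n+1}$. This holds because Laguerre zeros increase strictly with $\alpha$ by Markov's monotonicity theorem. Alternatively, the identity $L^{(\alpha)}_{n+1}=L^{(\alpha+1)}_{n+1}+(n+1)L^{(\alpha+1)}_n$ falls under the second row of Table~\ref{Summary-Mixed-recurrence}, so the result can be quoted from \cite{Jordaan-Tookos-ANM-2009}. Once the hypotheses are verified, Theorem~\ref{MainTheorem1}(1) gives directly that $n+1<x_{n+1,n+1}$ and that $(x-n-1)L^{(\alpha)}_n\prec L^{(\alpha+1)}_{n+1}$. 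It also gives that $L^{(\alpha+1)}_{n+1}\prec L^{(\alpha)}_n$ exactly when $n+1<x_{1,n+1}$.

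The step I expect to be the main obstacle is verifying that $L^{(\alpha)}_n$ and $L^{(\alpha+1)}_{n+1}$ have no common zeros. Suppose $x_0$ were a common zero. The relation forces $(x_0-n-1)L^{(\alpha)}_{n+1}(x_0)=0$. Strict interlacing of $L^{(\alpha)}_{n+1}$ and $L^{(\alpha+1)}_{n+1}$ rules out $L^{(\alpha)}_{n+1}(x_0)=0$, so the only possibility is $x_0=n+1$. To exclude this I would evaluate the identities at $x=n+1$. If $L^{(\alpha)}_n(n+1)=L^{(\alpha+1)}_{n+1}(n+1)=0$, the shift identity gives $L^{(\alpha+1)}_n(n+1)=-L^{(\alpha)}_{n+1}(n+1)/(n+1)$. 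I would then aim to combine this with the three-term recurrence \eqref{TTRR-Monic-Laguerre}, or with the differential relation $\frac{d}{dx}L^{(\alpha)}_{n+1}=(n+1)L^{(\alpha+1)}_n$, to force two consecutive polynomials of one orthogonal family to vanish at the same point, which is impossible. If this route fails, the fallback is to note that the degenerate parameter set is a finite algebraic condition on $\alpha$ for each $n$. Those $(n,\alpha)$ would be excluded, which is consistent with the ``suitable choice'' wording. The interlacing conclusion would then follow as in Theorem~\ref{MainTheorem1} after cancelling the common factor, as was done for even $n$ in the Narayana example.
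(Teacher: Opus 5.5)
Your route is the paper's: the same mixed relation $(n+1)(n+\alpha+1)L^{(\alpha)}_n=-xL^{(\alpha+1)}_{n+1}+(x-n-1)L^{(\alpha)}_{n+1}$, the same choice of $\mathcal{G}_{n+1},\mathcal{Q}_{n+1},\mathcal{P}_n$, and Theorem~\ref{MainTheorem1}(1). Your derivation of the relation from the shift identity and the Christoffel identity is correct; the paper simply cites it. The paper never checks the no-common-zeros hypothesis, and you are right to flag it. However, your primary plan for that step cannot succeed. Evaluating the mixed relation at $x=n+1$ gives
\[
L^{(\alpha+1)}_{n+1}(n+1)=-(n+\alpha+1)\,L^{(\alpha)}_n(n+1).
\]
More generally, \eqref{GeneralMixedTTRR} at $x=E$ reads $A(E)\mathcal{P}_n(E)=B(E)\mathcal{G}_{n+1}(E)$. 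So the two vanishing conditions at $n+1$ are a single equation in $\alpha$. No combination of the recurrence with $\frac{d}{dx}L^{(\alpha)}_{n+1}=(n+1)L^{(\alpha+1)}_n$ can produce a contradiction, because the degenerate case actually occurs. For $n=1$, $\alpha=1$ one has $L^{(1)}_1(x)=x-2$ and $L^{(2)}_2(x)=(x-2)(x-6)$. Then $(x-2)L^{(1)}_1$ has a double zero at $2$, and the asserted strict interlacing $(x-n-1)L^{(\alpha)}_n\prec L^{(\alpha+1)}_{n+1}$ is false. Hence the corollary's ``for any $\alpha>-1$'' needs the extra hypothesis $L^{(\alpha)}_n(n+1)\neq0$, and the paper's proof has exactly this gap.

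Your fallback of excluding those $(n,\alpha)$ is the correct repair. Be clear, though, that it proves a corrected statement, since the ``suitable choice'' wording only qualifies the ``Moreover'' clause. Your remark that interlacing then follows ``after cancelling the common factor'' is unsupported. After dividing by $x-n-1$ the relation is no longer of the form \eqref{GeneralMixedTTRR}. For example, at $n=2$, $\alpha=(3-\sqrt{13})/2$ (where $L^{(\alpha)}_2(3)=0$), the quotients have zeros $\{0.394\}$ and $\{0.771,\,7.321\}$, which do not interlace. What does survive in every case is $x_{n+1,n+1}>n+1$: the zeros of $L^{(\alpha+1)}_{n+1}$ have mean $n+\alpha+2>n+1$.

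There are also three minor slips. The sign should be $L^{(\alpha+1)}_n(n+1)=+L^{(\alpha)}_{n+1}(n+1)/(n+1)$. Markov monotonicity alone gives only $y_{k,n+1}<x_{k,n+1}$, not interlacing. Finally, $L^{(\alpha)}_{n+1}=L^{(\alpha+1)}_{n+1}+(n+1)L^{(\alpha+1)}_n$ is of the type in the fourth row of Table~\ref{Summary-Mixed-recurrence}, not the second; a direct sign check at the zeros of $L^{(\alpha+1)}_{n+1}$ settles $L^{(\alpha)}_{n+1}\prec L^{(\alpha+1)}_{n+1}$ anyway.
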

\begin{proof} 
	This result was proved in \cite[Theorem 2.1]{Arvesu-Driver-Littlejohn-ITSF-2021} and is a direct consequence of Theorem \ref{MainTheorem1}: Let  $\{y_{k,n+1}\}_{k=1}^{n+1}$ denote the zeros of the polynomial $L^{(\alpha)}_{n+1}(x)$. Using (cf. \cite[eq.~(1)]{Driver-Jordaan-NumerMath-2007}), the monic Laguerre polynomial $L^{(\alpha)}_n(x)$ satisfies the following mixed three-term recurrence relation
	\begin{align}
		(n+1)(n+\alpha+1)L^{(\alpha)}_n(x)=-xL^{(\alpha+1)}_{n+1}(x)+(x-n-1)L^{(\alpha)}_{n+1}(x),
	\end{align}
	for $n\in \mathbb{N}, \alpha>-1$. Denoting $\mathcal{G}_{n+1}(x):=L^{(\alpha+1)}_{n+1}(x)$, $\mathcal{Q}_{n+1}(x):=L^{(\alpha)}_{n+1}(x)$, and $\mathcal{P}_n(x):=L^{(\alpha)}_n(x)$, the zeros of $\mathcal{G}_{n+1}(x)$ and $\mathcal{Q}_{n+1}(x)$ satisfy \eqref{Interlace1withsamedegree} by \cite[Theorem 2.3]{Driver-Jordaan-NumerMath-2007}. Hence, the result follows immediately from Theorem \ref{MainTheorem1} (1).
\end{proof}

Note that the lower bound $n+1$ of the largest zero $x_{n+1,n+1}$ of $L^{(\alpha+1)}_{n+1}(x)$ is explicitly proved in Corollary \ref{interlaceLaguerre-n_a-n+1_a+1} whereas the lower bound is stated in \cite[Theorem 2.1]{Arvesu-Driver-Littlejohn-ITSF-2021}  without a clear proof.

\section{Proofs of the main results}
\label{Proofs}

{\bf{Proof of Theorem \ref{MainTheorem1}.}} Suppose that the zeros of $\mathcal{G}_{n+1}$ and $\mathcal{Q}_{n+1}$, denoted by $\{x_{k,n+1}\}_{k=1}^{n+1}$ and $\{y_{k,n+1}\}_{k=1}^{n+1}$, respectively, satisfy the alternating property
\begin{align}\label{Interlace1withsamedegree}
	y_{1,n+1}<x_{1,n+1}<y_{2,n+1}<x_{2,n+1}<\cdots<x_{n,n+1}<y_{n+1,n+1}<x_{n+1,n+1}
\end{align}
or 
\begin{align}\label{Interlace2withsamedegree}
	x_{1,n+1}<y_{1,n+1}<x_{2,n+1}<y_{2,n+1}<\cdots<y_{n,n+1}<x_{n+1,n+1}<y_{n+1,n+1},
\end{align}
on the (finite or infinite) interval $(a,b)$. 
Note that $E\neq x_{k,n+1}$ for any $k=1,2,\dots,n+1$, since if $E=x_{k,n+1}$ for some $k$, which is zero of $\mathcal{G}_{n+1}$ then by \eqref{GeneralMixedTTRR}, $x_{k,n+1}$ is the common zero of $\mathcal{G}_{n+1}$ and $\mathcal{P}_n(x)$ and this contradicts the assumption that $\mathcal{G}_{n+1}$ and $\mathcal{P}_n(x)$ have no common zeros.
For each $k=1,2,3,\cdots,n$, evaluating \eqref{GeneralMixedTTRR} at $x_{k,n+1}$ and $x_{k+1,n+1}$, we obtain
\begin{align}\label{GeneralMixedTTRR-Evaluateatx_kandx_k+1}
	\mathcal{P}_n(x_{k,n+1})	\mathcal{P}_n(x_{k+1,n+1})=\frac{(x_{k,n+1}-E)(x_{k+1,n+1}-E)}{A(x_{k,n+1})A(x_{k+1,n+1})} \mathcal{Q}_{n+1}(x_{k,n+1})\mathcal{Q}_{n+1}(x_{k+1,n+1}).
\end{align}
Since the zeros of $\mathcal{G}_{n+1}$ and $\mathcal{Q}_{n+1}$ interlace, we have $\mathcal{Q}_{n+1}(x_{k,n+1})\mathcal{Q}_{n+1}(x_{k+1,n+1})<0$ for each $k=1,2,3,\cdots,n$. Also, by hypothesis, $A(x_{k,n+1})A(x_{k+1,n+1})>0$. Thus, the sign of the left-hand side of \eqref{GeneralMixedTTRR-Evaluateatx_kandx_k+1} depends on the sign of the product $(x_{k,n+1}-E)(x_{k+1,n+1}-E)$, in other words, the location of the point $E$. If $E\not \in (x_{k,n+1}, x_{k+1,n+1})$, then $(x_{k,n+1}-E)(x_{k+1,n+1}-E)>0$ for each $k=1,2,3,\cdots,n$ while $(x_{k,n+1}-E)(x_{k+1,n+1}-E)<0$ if $E\in (x_{k,n+1}, x_{k+1,n+1})$. Therefore, for each $k=1,2,3,\cdots,n$, we have 
\begin{align}\label{product-Pn-negative-in-interval}
	\mathcal{P}_n(x_{k,n+1})	\mathcal{P}_n(x_{k+1,n+1})<0~~  \text{if and only if} ~~E\not \in (x_{k,n+1}, x_{k+1,n+1}).
\end{align}
This shows that the polynomial $\mathcal{P}_n$ has an odd number of zeros in the interval $(x_{k,n+1},x_{k+1,n+1})$ for each $k=1,2,3,\cdots,n$ if it does not contain the point $E$. 
If $E<x_{n+1,n+1}$ then we have two possible sub cases
\begin{itemize}
	\item either $E<x_{1,n+1}$,
	\item or $x_{j,n+1}<E< x_{j+1,n+1}$ for some $j=1,2,\cdots n$.
\end{itemize}
By evaluating \eqref{GeneralMixedTTRR} at $x_{n+1,n+1}$, we get
\begin{align}\label{GeneralMixedTTRR-at-x_n+1}
	\mathcal{P}_n(x_{n+1,n+1})=\frac{(x_{n+1,n+1}-E)}{A(x_{n+1,n+1})}\mathcal{Q}_{n+1}(x_{n+1,n+1})
\end{align}
\begin{enumerate}
	\item  If $E < x_{n+1,n+1}$ and \eqref{Interlace1withsamedegree} holds, then, using the fact that $\lim\limits_{x\rightarrow +\infty}\mathcal{Q}_{n+1}(x)=+\infty$, we have $\mathcal{Q}_{n+1}(x_{n+1,n+1})>0$. Thus, $\mathcal{P}_n(x_{n+1,n+1})>0$ whenever $E < x_{n+1,n+1}$  and \eqref{Interlace1withsamedegree} holds. Hence, the interval $(x_{n+1,n+1}, \infty)$ contains an even number of zeros of $\mathcal{P}_n$.

	\begin{itemize}
		\item When $E < x_{1,n+1}$, by \eqref{product-Pn-negative-in-interval}, each of the $n$ intervals $(x_{k,n+1}, x_{k+1,n+1})$, $k\in\{1,\dots,n\}$ contains exactly one zero of $\mathcal{P}_n$. Hence, $\mathcal{P}_n$ has no zeros in the interval $(x_{n+1,n+1}, \infty)$. Suppose that the zeros of $\mathcal{P}_n$ denoted by $\{z_{i,n}\}_{i=1}^n$. Therefore, the zeros of $\mathcal{P}_n$ and $\mathcal{G}_{n+1}$ must be arranged as  
		\begin{align}
			E < x_{1,n+1} < z_{1,n} < x_{2,n+1} < z_{2,n} < \cdots < x_{n,n+1} < z_{n,n} < x_{n+1,n+1}.
		\end{align}
		\item When $x_{j,n+1}<E< x_{j+1,n+1}$ for some $j=1,2,\cdots n$,  \eqref{GeneralMixedTTRR-Evaluateatx_kandx_k+1} ensures that $\mathcal{P}_n(x_{j,n+1})	\mathcal{P}_n(x_{j+1,n+1})>0$.  This means that  $(x_{j,n+1}, x_{j+1,n+1})$ contains even number of zeros of  $\mathcal{P}_n$. Since $n-1$ zeros of $\mathcal{P}_n$ are already captured in $(x_{k,n+1}, x_{k+1,n+1})$ for each $k=1,2,3, \cdots n$ and $k\neq j$. Thus, the interval $(x_{k,n+1}, x_{k+1,n+1})$ and $(x_{n+1,n+1}, \infty)$ does not contains the zero of $\mathcal{P}_n$. Therefore the only possibilities of the remaining one zero of $\mathcal{P}_n$ lies in the interval $(-\infty, x_{1,n+1})$. The possible arrangement of the zeros is
		\begin{align}
			\nonumber	z_{1,n}<x_{1,n+1}<z_{2,n}<x_{2,n+1}<z_{3,n}<\cdots< x_{j,n+1}<E<x_{j+1,n+1}
			\\<z_{j+1,n}<\cdots<x_{n,n+1}<z_{n,n}<x_{n+1,n+1}.
		\end{align}
		Hence, $(x-E)\mathcal{P}_n(x)\prec\mathcal{G}_{n+1}(x)$.
	\end{itemize}
	If  $E>x_{n+1,n+1}$ and  \eqref{Interlace1withsamedegree} holds, then by using \eqref{GeneralMixedTTRR-at-x_n+1}, we have  $\mathcal{P}_n(x_{n+1,n+1})<0$. This suggest that $\mathcal{P}_n$ has odd number of zeros inside the interval $(x_{n+1,n+1}, \infty)$. However, \eqref{product-Pn-negative-in-interval} ensures that exactly one zero of the $n$ degree polynomial $\mathcal{P}_n$ lie in each interval $(x_{k,n+1}, x_{k+1,n+1})$ for $k=1,2,3, \cdots n$. This contradicts the fact that $\mathcal{P}_n$ must have an odd number of zeros inside the interval $(x_{n+1,n+1}, \infty)$. Therefore, the point $E$ cannot be greater than the largest zero of the polynomial $\mathcal{G}_{n+1}$, whenever \eqref{Interlace1withsamedegree} holds. 	
	\item If $E < x_{n+1,n+1}$ and \eqref{Interlace2withsamedegree} holds, then using the fact that $\lim\limits_{x\rightarrow +\infty}\mathcal{Q}_{n+1}(x)=+\infty$, we have $\mathcal{Q}_{n+1}(x_{n+1,n+1})<0$. Thus, $\mathcal{P}_n(x_{n+1,n+1})<0$, whenever $E < x_{n+1,n+1}$. Hence, at least one zero or an odd number of zeros of $\mathcal{P}_n$ lie in the interval $(x_{n+1,n+1}, \infty)$.
	\begin{itemize}
		\item When $E<x_{1,n+1}$, by \eqref{GeneralMixedTTRR-Evaluateatx_kandx_k+1}, each interval $(x_{k,n+1},x_{k+1,n+1})$ contains exactly one zero of $\mathcal{P}_n$. Thus, no zero of $\mathcal{P}_n$ lies in the interval $(x_{n+1,n+1}, \infty)$, which is a contradiction to the argument that there is an odd number of zeros of $\mathcal{P}_n$ contained in the interval $(x_{n+1,n+1}, \infty)$. Therefore, the point $E$ cannot be less than the smallest zero of the polynomial $\mathcal{G}_{n+1}$ whenever \eqref{Interlace2withsamedegree} holds. 
		
		\item When $x_{j,n+1}<E< x_{j+1,n+1}$ for some $j=1,2,\cdots n$,  \eqref{GeneralMixedTTRR-Evaluateatx_kandx_k+1} ensures that \newline $\mathcal{P}_n(x_{j,n+1})	\mathcal{P}_n(x_{j+1,n+1})>0$.  This means that  $(x_{j,n+1}, x_{j+1,n+1})$ contains even number of zeros of  $\mathcal{P}_n$. Since $n-1$ zeros of $\mathcal{P}_n$ are already captured in $(x_{k,n+1}, x_{k+1,n+1})$ for each $k=1,2,3, \cdots n$ and $k\neq j$. Thus, the interval $(x_{k,n+1}, x_{k+1,n+1})$  does not contain the zero of $\mathcal{P}_n$ and $(x_{n+1,n+1}, \infty)$ contain exactly one zero of the $\mathcal{P}_n$ . Therefore, the only possibility for the arrangement of the zero is  
		\begin{align}
			\nonumber	x_{1,n+1}<z_{1,n}<x_{2,n+1}<z_{2,n}<\cdots< x_{j,n+1}<E<x_{j+1,n+1}<z_{j,n}
			\\<\cdots<x_{n,n+1}<z_{n-1,n}<x_{n+1,n+1}<z_{n,n}.
		\end{align}
		Hence, $\mathcal{G}_{n+1}(x)\prec(x-E)\mathcal{P}_n(x)$.
	\end{itemize}
	If  $E>x_{n+1,n+1}$ and  \eqref{Interlace2withsamedegree} holds, then by using \eqref{GeneralMixedTTRR-at-x_n+1}, we have  $\mathcal{Q}_{n+1}(x_{n+1,n+1})<0$. Thus $\mathcal{P}_{n}(x_{n+1,n+1})>0$. This suggests that $\mathcal{P}_n$ has either no zero or an even number of zeros inside the interval $(x_{n+1,n+1}, \infty)$. However, \eqref{product-Pn-negative-in-interval} ensures that exactly one zero of the $n$ degree polynomial $\mathcal{P}_n$ lie in each interval $(x_{k,n+1}, x_{k+1,n+1})$ for $k=1,2,3, \cdots n$. Therefore, $\mathcal{P}_n$ has no zero inside the interval $(x_{n+1,n+1}, \infty)$. The only possibility of the location of zeros of $\mathcal{P}_n$ and $\mathcal{Q}_{n+1}$ is 
	\begin{align}
		x_{1,n+1}<z_{1,n}<x_{2,n+1}<z_{2,n}<\cdots<x_{n,n+1}<z_{n,n}<x_{n+1,n+1}<E.
	\end{align}
	
	Hence we proved that $\mathcal{G}_{n+1}\prec \mathcal{P}_n$.
\end{enumerate}This completes the proof.

\vspace{1cm}

{\bf{Proof of Theorem \ref{MainTheorem2*}.}} Let $\{x_{k,n}\}_{k=1,n}$
$\{z_{k,n}\}_{k=1}^{n}$, and $\{y_{k,n-1}\}_{k=1}^{n-1}$ denote the zeros of $\mathcal{G}_n$, $\mathcal{P}_n$ and $\mathcal{Q}_{n-1}$, respectively.
Assume that the zeros of $\mathcal{G}_{n}$ and $\mathcal{Q}_{n-1}$ strictly interlace in a (finite or infinite) interval $(a,b)$, i.e.,
\begin{align}\label{interlace-assumption-MainTheorem2*}
	x_{1,n} < y_{1,n-1} < x_{2,n} < y_{2,n-1}< \cdots < y_{n-1,n-1} < x_{n,n}.
\end{align} Note that $E\neq x_{k,n}$ for any $k=1,2,\dots,n$, since if $E=x_{k,n}$ for some $k$, which is zero of $\mathcal{G}_{n}$ then by \eqref{GeneralMixedTTRR2*}, $x_{k,n}$ is the common zero of $\mathcal{G}_{n}$ and $\mathcal{P}_n$ and this contradicts the assumption that $\mathcal{G}_{n}$ and $\mathcal{P}_n$ have no common zeros.
For each $k=1,2,\dots,n-1$, evaluate \eqref{GeneralMixedTTRR2*} at the points $x_{k,n}$ and $x_{k+1,n}$. This yields
\begin{align}\label{GeneralMixedTTRR2-Evaluateatx_kandx_k+1*}
	\mathcal{P}_n(x_{k,n})\mathcal{P}_n(x_{k+1,n})
	=\frac{(x_{k,n}-E)(x_{k+1,n}-E)}{A(x_{k,n})A(x_{k+1,n})}\,\mathcal{Q}_{n-1}(x_{k,n})\mathcal{Q}_{n-1}(x_{k+1,n}).
\end{align}
Since the zeros of $\mathcal{G}_n$ and $\mathcal{Q}_{n-1}$ interlace, 
$\mathcal{Q}_{n-1}(x_{k,n})\mathcal{Q}_{n-1}(x_{k+1,n}) < 0$ 
for each $k=1,2,\dots,n-1$. 
Moreover, $A(x_{k,n})A(x_{k+1,n}) > 0$ because $A(x)>0$ on $(a,b)$. 
Therefore,
\begin{align}\label{Thm2-NS condition-sign-Pn-interval*}
	\mathcal{P}_n(x_{k,n})\mathcal{P}_n(x_{k+1,n}) > 0
	\quad \text{if and only if} \quad 
	x_{k,n} < E < x_{k+1,n}.
\end{align}
Consequently, $\mathcal{P}_n$ changes sign across the interval $(x_{k,n},x_{k+1,n})$ if and only if this interval does not contain $E$.

Evaluating \eqref{GeneralMixedTTRR2*} at $x = x_{n,n}$ yields
\begin{align}\label{GeneralMixedTTRR2-at-x_nn*}
	\mathcal{P}_n(x_{n,n}) = -\frac{(x_{n,n}-E)}{A(x_{n,n})}\,\mathcal{Q}_{n-1}(x_{n,n}).
\end{align}
The interlacing condition \eqref{interlace-assumption-MainTheorem2*} implies that $\mathcal{Q}_{n-1}$ has no zero in $(x_{n,n},b)$ and $\mathcal{Q}_{n-1}(x_{n,n}) > 0$. Since $A(x_{n,n}) > 0$, 
\begin{align*}
	\mathcal{P}_n(x_{n,n}) < 0 \quad \text{if} \quad E < x_{n,n}, \qquad 
	\mathcal{P}_n(x_{n,n}) > 0 \quad \text{if} \quad E > x_{n,n}.
\end{align*}
Thus the interval $(x_{n,n},b)$ contains an odd number of zeros of $\mathcal{P}_n$ when $E < x_{n,n}$ and an even number (possibly zero) when $E > x_{n,n}$.

If $E < x_{1,n}$ or $E > x_{n,n}$, then $E$ lies outside all intervals $(x_{k,n},x_{k+1,n})$, $k=1,\dots,n-1$. Hence \eqref{Thm2-NS condition-sign-Pn-interval*} forces a sign change of $\mathcal{P}_n$ in each of these $n-1$ intervals, so $\mathcal{P}_n$ has exactly one zero in each $(x_{k,n},x_{k+1,n})$.  
When $E < x_{1,n}$ the interval $(x_{n,n},b)$ contains an odd number of zeros, while $(a,x_{1,n})$ contains an even number; the remaining zero therefore lies in $(x_{n,n},b)$.  
When $E > x_{n,n}$ the situation is reversed: $(a,x_{1,n})$ contains an odd number and $(x_{n,n},b)$ an even number, so the remaining zero lies in $(a,x_{1,n})$.  
In both situations,  $(x-E)\mathcal{P}_n\prec \mathcal{G}_n$ since
\begin{align*}
	z_{1,n} < x_{1,n} < z_{2,n} < \cdots < z_{n,n} < x_{n,n}	~\text{or}~	x_{1,n} < z_{1,n} < x_{2,n}  < \cdots < z_{n-1,n} < x_{n,n} < z_{n,n},
\end{align*}
according to whether $E > x_{n,n}$ or $E < x_{1,n}$ respectively.

If $x_{j,n} < E < x_{j+1,n}$ for some $j \in \{1, 2, \ldots, n-1\}$, then, by \eqref{Thm2-NS condition-sign-Pn-interval*}, at least one zero or odd number of zero of $\mathcal{P}_n$ lie in each interval $(x_{k,n}, x_{k+1,n})$ with $k \neq j$. Also, by \eqref{GeneralMixedTTRR2-at-x_nn*}, at least one zero or odd number of zero of  $\mathcal{P}_n$ lie in the interval $(x_{n,n},b)$. Thus, counting the number of zeros of $\mathcal{P}_n$, exactly one zero lie in each interval $(x_{k,n}, x_{k+1,n})$ with $k \neq j$ and $(x_{n,n},b)$. Thus, only possibility of the remaining zero of $\mathcal{P}_n$ lie in $(a,x_{1,n})$. This shows that $(x-E)\mathcal{P}_n(x)\prec \mathcal{G}_n(x)$; that is,
\begin{align*}
	z_{1,n} < x_{1,n} < z_{2,n} < \cdots < x_{j,n} < E  < x_{j+1,n} < \cdots < x_{n,n} < z_{n,n}.
\end{align*}

This completes the proof.
\vspace{1cm} 

{\bf{Proof of Theorem \ref{MainTheorem2}.}}
Suppose that the zeros of the monic polynomials $\mathcal{G}_{n}$, $\mathcal{P}_n$, and $\mathcal{Q}_{n+1}$ are denoted by 
$\{x_{k,n}\}_{k=1}^{n}$, $\{z_{k,n}\}_{k=1}^{n}$, and $\{y_{k,n+1}\}_{k=1}^{n+1}$, respectively. 
Assume that $\mathcal{Q}_{n+1}\prec\mathcal{G}_{n}$, i.e.,
\begin{align}
	y_{1,n+1} < x_{1,n} < y_{2,n+1} < x_{2,n} < \cdots < y_{n,n+1} < x_{n,n} < y_{n+1,n+1},
\end{align}
on a (finite or infinite) interval $(a,b)$.
For each \( k = 1, 2, 3, \ldots, n-1 \), we evaluate \eqref{GeneralMixedTTRR2} at points \( x_{k,n} \) and \( x_{k+1,n} \). This gives us the following equation
\begin{align}\label{GeneralMixedTTRR2-Evaluateatx_kandx_k+1}
	\mathcal{P}_n(x_{k,n})	\mathcal{P}_n(x_{k+1,n})=\frac{(x_{k,n}-E)(x_{k+1,n}-E)}{A(x_{k,n})A(x_{k+1,n})} \mathcal{Q}_{n+1}(x_{k,n})\mathcal{Q}_{n+1}(x_{k+1,n}).
\end{align}
Since the zeros of \( \mathcal{G}_{n} \) and \( \mathcal{Q}_{n+1} \) interlace, we have \( \mathcal{Q}_{n+1}(x_{k,n}) \mathcal{Q}_{n+1}(x_{k+1,n}) < 0 \) for each \newline \( k = 1, 2, 3, \ldots, n-1 \). Moreover, by assumption, \( A(x_{k,n}) A(x_{k+1,n}) > 0 \). Thus, for each \( k = 1, 2, \ldots, n-1 \), we conclude that
\begin{align}\label{Thm2-NS condition-sign-Pn-interval}
	\mathcal{P}_n(x_{k,n})	\mathcal{P}_n(x_{k+1,n})>0~ \text{if and only if}  ~E \in (x_{k,n},x_{k+1,n}).
\end{align}

Further, evaluating \eqref{GeneralMixedTTRR2} at $x_{n,n}$ yields 
\begin{align}\label{GeneralMixedTTRR2-at-x_nn}
	\mathcal{P}_n(x_{n,n}) = -\frac{(x_{n,n}-E)}{A(x_{n,n})}\,\mathcal{Q}_{n+1}(x_{n,n}).
\end{align}
Since $\mathcal{Q}_{n+1}(x_{n,n}) < 0$, the sign of $\mathcal{P}_n(x_{n,n})$ depends on the position of $E$. If $E < x_{n,n}$, then $\mathcal{P}_n(x_{n,n}) > 0$, implying that $(x_{n,n}, b)$ contains an even number of zeros of $\mathcal{P}_n$ . To locate all zeros of $\mathcal{P}_n$, we consider the following subcases.

\smallskip
\noindent
\textbf{Subcase 1.} If $E < x_{1,n}$, then by \eqref{Thm2-NS condition-sign-Pn-interval}, each of the $n-1$ intervals $(x_{k,n}, x_{k+1,n})$, $k=1,2, \dots,n-1$ contains at least one zero of $\mathcal{P}_n$, and counting the number of zeros, exactly one zero of $\mathcal{P}_n$ for $k = 1, 2, \ldots, n-1$. Consequently, the remaining zero of $\mathcal{P}_n$ must lie in $(a, x_{1,n})$. Denoting by $\{z_{k,n}\}_{k=1}^{n}$ the zeros of $\mathcal{P}_n$, it follows that $\mathcal{P}_n\prec\mathcal{G}_n$, i.e.,
\begin{align*}
	z_{1,n} < x_{1,n} < z_{2,n} < x_{2,n} < \cdots < z_{n,n} < x_{n,n}.
\end{align*}

\smallskip
\noindent
\textbf{Subcase 2.} If $x_{k',n} < E < x_{k'+1,n}$ for some $k' \in \{1, 2, \ldots, n-1\}$, then, by \eqref{Thm2-NS condition-sign-Pn-interval}, for each
$k\neq k'$, the interval $(x_{k,n},x_{k+1,n})$ contains an odd number of zeros of $\mathcal{P}_n$, and hence at least one zero of $\mathcal{P}_n$. Since there are $n-2$ such intervals, it follows that at least $n-2$ zeros of $\mathcal{P}_n$ already lie in distinct intervals whose end points are consecutive zeros of $\mathcal{G}_n$.
As $\deg(\mathcal{P}_n)=n$, only two zeros of $\mathcal{P}_n$ remain to be located. When counting the number of zeros of $\mathcal{P}_n$, there are two possible configurations for the remaining two zeros. Either there exists a fixed index $t \neq k'$ such that exactly one  interval $(x_{t,n}, x_{t+1,n})$ contains three zeros of $\mathcal{P}_n$, while each of the remaining $n-3$ intervals $(x_{k,n}, x_{k+1,n})$ (for $k = 1,\dots,n-1$, $k \neq k'$, $k \neq t$) contains exactly one zero; or each of the $n-2$ intervals $(x_{k,n}, x_{k+1,n})$ with $k \neq k'$ contains exactly one zero, and the remaining two zeros lie in one of the following: $(a, x_{1,n})$, $(x_{k',n}, x_{k'+1,n})$, or $(x_{n,n}, b)$. Moreover, since \(E<x_{n,n}\), it follows from \((4.17)\) that \(\mathcal{P}_n(x_{n,n})>0\), and therefore the interval \((x_{n,n},b)\) contains an even number of zeros of \(\mathcal{P}_n\).
Hence the remaining two zeros cannot split with one lying in \((a,x_{1,n})\) and the other in \((x_{n,n},b)\). If the remaining two zeros lie in $(x_{k',n}, x_{k'+1,n})$ such that one lies in $(x_{k',n}, E)$ and the other in $(E, x_{k'+1,n})$, then $(x-E)\mathcal{G}_n(x)\prec \mathcal{P}_n(x)$; that is,
\begin{align*}
	x_{1,n} < z_{1,n} < x_{2,n} < z_{2,n} < \cdots < x_{k',n} < z_{k',n} < E < z_{k'+1,n} < x_{k'+1,n} < \cdots < z_{n,n} < x_{n,n}.
\end{align*}

\smallskip
\noindent
On the other hand, if $E > x_{n,n}$, then from \eqref{GeneralMixedTTRR2-at-x_nn} we have $\mathcal{P}_n(x_{n,n}) < 0$, so $\mathcal{P}_n$ has an odd number of zeros in $(x_{n,n}, b)$. Moreover, by \eqref{Thm2-NS condition-sign-Pn-interval}, each interval $(x_{k,n}, x_{k+1,n})$, $k = 1, 2, \ldots, n-1$, contains exactly one zero of $\mathcal{P}_n$. Hence, the only possible configuration of zeros is
\begin{align}
	x_{1,n} < z_{1,n} < x_{2,n} < z_{2,n} < x_{3,n} < \cdots < x_{n,n} < z_{n,n}.
\end{align}
This completes the proof.

\section{Conclusion}
The results obtained in this study provide a systematic analysis for completing the interlacing of zeros of distinct polynomial sequences by involving an additional interlacing point, $E$, that appears naturally in the general mixed recurrence relations used to prove the results.
 
The general mixed recurrence relations introduced here serve as a versatile tool for obtaining new interlacing results. Because the main findings of this work are independent of orthogonality constraints, the framework is equally effective for both orthogonal and non-orthogonal sequences. This is demonstrated through novel interlacing results derived for Krawtchouk, Meixner, and Narayana polynomials. We anticipate that the generality of our framework will facilitate the discovery of new interlacing properties across a much wider family of polynomials.
 
Beyond new examples, the analysis also reinforces existing literature for Laguerre and Jacobi polynomials. Our general approach provides straightforward proofs for the interlacing properties of classical Jacobi (see Corollaries \ref{interlaceJacobi-n_ab-n+1_ab+1} and \ref{interlaceJacobi-n_ab-n+1_a+1b+1*}) and Laguerre polynomials (see Corollary \ref{interlaceLaguerre-n_a-n+1_a+1}), effectively recovering the results presented in \cite[Theorem 1]{Arvesu-Driver-Littlejohn-RamanujanJ-2023}, \cite[Theorem 3]{Arvesu-Driver-Littlejohn-RamanujanJ-2023}, and \cite[Theorem 2.1]{Arvesu-Driver-Littlejohn-ITSF-2021}. More importantly, by precisely evaluating the conditions imposed on $E$, our framework improves the interlacing results established in \cite[Theorem 3]{Arvesu-Driver-Littlejohn-RamanujanJ-2023} (see Corollary \ref{interlaceJacobi-n_ab-n+1_a+1b+1*} and Remark   \ref{rmk1:interlaceJacobi-n_ab-n_a+1b+1} and \ref{rmk2:interlaceJacobi-n_ab-n+1_a+1b+1}). We also prove full interlacing of zeros of these polynomials under assumption of appropriate position(s) for the extra point, $E$.


\bibliographystyle{cas-model2-names}

\bibliography{cas-refs}



\end{document}